\documentclass[11pt]{amsart}

\usepackage[T1]{fontenc}
\usepackage[utf8]{inputenc}
\usepackage{lmodern}
\usepackage{microtype}
\usepackage{mathtools}
\usepackage{amssymb}
\usepackage{mathrsfs}
\usepackage{enumitem}
\usepackage{aliascnt}
\usepackage{hyperref}
\usepackage[nameinlink,capitalize,noabbrev]{cleveref}

\hypersetup{
  colorlinks=true,
  linkcolor=blue,
  citecolor=blue,
  urlcolor=blue,
  pdftitle={Exceptional supersphere integration and logarithmic Pizzetti kernels},
  pdfauthor={Juan Bory-Reyes, Baruch Schneider{$^{\ast}$}, Diana Schneiderova, Yifan Zhang},
  pdfsubject={Orthosymplectically invariant integration, logarithmic Pizzetti kernels, and exceptional superdimension},
  pdfkeywords={supersphere integration, exceptional superdimension, Pizzetti formula, logarithmic kernel, orthosymplectic symmetry, Fischer decomposition}
}

\numberwithin{equation}{section}

\newtheorem{theorem}{Theorem}[section]
\newaliascnt{proposition}{theorem}
\newtheorem{proposition}[proposition]{Proposition}
\aliascntresetthe{proposition}
\newaliascnt{lemma}{theorem}
\newtheorem{lemma}[lemma]{Lemma}
\aliascntresetthe{lemma}
\newaliascnt{corollary}{theorem}

\aliascntresetthe{corollary}
\newaliascnt{conjecture}{theorem}

\aliascntresetthe{conjecture}
\theoremstyle{definition}
\newaliascnt{definition}{theorem}
\newtheorem{definition}[definition]{Definition}
\aliascntresetthe{definition}
\newaliascnt{example}{theorem}

\aliascntresetthe{example}
\newaliascnt{problem}{theorem}

\aliascntresetthe{problem}
\theoremstyle{remark}
\newaliascnt{remark}{theorem}
\newtheorem{remark}[remark]{Remark}
\aliascntresetthe{remark}
\crefname{theorem}{theorem}{theorems}
\Crefname{theorem}{Theorem}{Theorems}
\crefname{proposition}{proposition}{propositions}
\Crefname{proposition}{Proposition}{Propositions}
\crefname{lemma}{lemma}{lemmas}
\Crefname{lemma}{Lemma}{Lemmas}
\crefname{corollary}{corollary}{corollaries}
\Crefname{corollary}{Corollary}{Corollaries}
\crefname{definition}{definition}{definitions}
\Crefname{definition}{Definition}{Definitions}
\crefname{problem}{problem}{problems}
\Crefname{problem}{Problem}{Problems}
\crefname{remark}{remark}{remarks}
\Crefname{remark}{Remark}{Remarks}

\newcommand{\SSph}{\mathbb{S}\mathbb{S}}

\newcommand{\Ker}{\operatorname{Ker}}

\title[Exceptional supersphere integration]{Exceptional supersphere integration and logarithmic Pizzetti kernels}

\author{Juan Bory-Reyes}
\address{ESIME-Zacatenco, Instituto Polit\'ecnico Nacional, 07738 CDMX, M\'exico}
\email{juanboryreyes@yahoo.com}

\author{Baruch Schneider{$^{\ast}$}}
\address{Department of Mathematics, University of Ostrava, 70200 Ostrava, Czech Republic}
\email{baruch.schneider@osu.cz}
\thanks{$\ast$ Corresponding author}

\author{Diana Schneiderov\'a}
\address{Department of Mathematics, University of Ostrava, 70200 Ostrava, Czech Republic}
\email{diana.schneiderova@osu.cz}

\author{Yifan Zhang}
\address{Department of Mathematics, University of Ostrava, 70200 Ostrava, Czech Republic}
\address{Department of Algebra, Charles University, 18675 Prague, Czech Republic}
\address{Department of Applied Mathematics, VSB--Technical University of Ostrava, 70800 Ostrava, Czech Republic}
\email{yifan.zhang@osu.cz}

\date{}

\subjclass[2020]{Primary 30G35; Secondary 33D05, 58A50, 43A90}
\keywords{supersphere integration, exceptional superdimension, Pizzetti formula, logarithmic kernel, orthosymplectic symmetry, Fischer decomposition}

\begin{document}

\begin{abstract}
We study orthosymplectically invariant supersphere integration at the
exceptional superdimensions $M=-2u$, where the harmonic Fischer structure
becomes nonsemisimple and the Pizzetti pairing degenerates.  For the
meromorphically continued homogeneous inverse kernels we obtain the generating
function
\[
 \mathscr G_\mu(\rho;x,y)
 =\frac{\Gamma(\mu/2)}{2\pi^{\mu/2}}
  \bigl(1+\rho\{x,y\}+\rho^2x^2y^2\bigr)^{-\mu/2}.
\]
At $\mu=-2u$, its Laurent expansion has a polynomial residue and a
logarithmic finite part.  We prove that these coefficients recover the
complete degreewise duality structure on a fixed superspace with nonzero
bosonic dimension.  In degrees $k\le u$, the residue inverts a canonical
renormalized pairing on $\mathcal P_k$.  In the collision range
$u<k\le2u$, the ordinary pairing has radical
$(x^2)^{k-u}\mathcal P_{2u-k}$; the finite part reproduces the quotient,
while the residue reproduces the radical after transport from the reflected
degree.  For $k\ge2u+1$, the finite part is the ordinary inverse kernel.  We
also establish the nondegenerate head--socle pairing on the generalized
harmonic modules.  As an application, we derive covariant right--left radial
$q$-monogenic zonal symbols and identify precise degree-one obstructions to
transferring scalar Pizzetti reproduction through a one-sided $q$-Fischer
projection.
\end{abstract}

\maketitle

\section{Introduction}

Supersphere integration is the orthosymplectically invariant integration
problem associated with flat superspace.  It is controlled generically by the
superdimension $M=m-2n$, but the negative even values $M=-2u$ are singular:
the harmonic Fischer structure becomes nonsemisimple, generalized harmonic
modules replace colliding harmonic rows, and the invariant Pizzetti pairing
loses rank.  The problem is therefore not merely to continue a scalar formula
to a singular parameter, but to recover the quotient, radical, and extension
data that replace the missing inverse pairing.

Earlier work developed the Pizzetti formula and Funk--Hecke theorem in
superspace, orthosymplectic Howe duality, normalized integration in low
exceptional degrees, generalized Fischer decompositions, and exceptional
branching laws
\cite{DeBieSommen2007Spherical,CoulembierDeBieSommen2010Invariant,
Coulembier2013Orthosymplectic,GuzmanAdan2021CK,LavickaSmid2015,
Lavicka2023Branching}.  Beyond superspace, the Pizzetti principle has been
extended to invariant integration over real, complex, and quaternionic
Stiefel manifolds, with an invariant-theoretic interpretation involving Howe
dual pairs \cite{CoulembierKieburg2015}.  These works place Pizzetti-type
functionals in a broad invariant-integration framework.  They do not,
however, provide a single inverse-kernel description of all exceptional
supersphere degrees.

The first purpose of this paper is to construct such a description.  By an
\emph{inverse Pizzetti kernel} in degree $k$ we mean the kernel of the
identity operator with respect to the degree-$k$ Pizzetti pairing, or with
respect to the explicitly stated renormalized or quotient pairing when the
ordinary form degenerates.  If $\mathsf G_{M,k}$ denotes the generic inverse
kernel, then its meromorphic continuation in a formal dimension parameter
$\mu$ has the generating series
\begin{equation}\label{eq:intro-global-kernel}
 \mathscr G_\mu(\rho;x,y)
 :=\sum_{k\ge0}\rho^k\mathsf G_{\mu,k}(x,y)
 =\frac{\Gamma(\mu/2)}{2\pi^{\mu/2}}
  \bigl(1+\rho\{x,y\}+\rho^2x^2y^2\bigr)^{-\mu/2}.
\end{equation}
At $\mu=-2u$, the pole of $\Gamma(\mu/2)$ combines with the variation of the
exponent in \eqref{eq:intro-global-kernel}; differentiating the power
$D^{-\mu/2}$ therefore forces a logarithmic term.  The logarithm is not an
auxiliary regularization choice: its homogeneous coefficients are exactly
the finite-part kernels needed on the quotient and in the high-degree range.

The residue and finite part have complementary roles in the three exceptional
regimes:
\begin{enumerate}[label=\textup{(\roman*)}]
\item for $0\le k\le u$, the ordinary Pizzetti form is identically zero and
the residue is the inverse kernel of a canonical first-order renormalization
on the full space $\mathcal P_k$;
\item for $u+1\le k\le2u$, the ordinary form has exact radical
$(x^2)^{k-u}\mathcal P_{2u-k}$, the finite part reproduces the quotient, and
the residue reproduces the radical after transport from degree $2u-k$;
\item for $k\ge2u+1$, the ordinary form is nondegenerate and the finite part
is its inverse kernel on the full homogeneous space.
\end{enumerate}
We also compute the head--socle block on every exceptional generalized
harmonic module and prove that the resulting pairing is nondegenerate.  Thus
the Laurent expansion recovers the duality structure of the associated
graded exceptional Fischer filtration.  To the best of our knowledge, these
residue and finite-part kernels have not previously been assembled into one
meromorphic framework covering all three regimes.

The geometric statements are proved on a fixed superspace of superdimension
$M=-2u$ with nonzero bosonic dimension.  This hypothesis is used for the
injectivity of multiplication by the quadratic variable in the transported
radical pairing.  The purely fermionic case has additional finite-dimensional
degeneracies and is not treated here.

As an application, we consider intrinsic right radial $q$-monogenic
projection.  The abstract $q$-vector derivative and determinant-localized
Fischer projector were constructed in
\cite{BarseghyanBoryReyesSchneiderZhang2026}; their specialization to
integral superdimension, resonance, and faithful finite superspace
realizations was established in
\cite{BoryReyesSchneiderBarseghyanZhang2026}.  We derive a closed
scalar--bivector recurrence for the projected two-point zonal symbol and
prove right $q$-monogenicity in $x$, left $q$-monogenicity in $y$, and
covariance.  Degree-one calculations show precisely why scalar Pizzetti
reproduction cannot be transferred by a naive one-sided projection.  We then
formulate the resulting finite-block tensor representatives over the
localized passive-scalar field and isolate the remaining collision-range
compatibility condition.  This application is distinct from earlier
$q$-deformed Clifford operator systems
\cite{CoulembierSommen2010Q,CoulembierSommen2011} and from coordinatewise
Clifford--Jackson calculi \cite{ZimmermannBernsteinSchneider2025}.

The paper is organized as follows.  \Cref{sec:global-supersphere} proves the
meromorphic generating formula and its logarithmic Laurent expansion.
\Cref{sec:exceptional-geometry} determines the renormalized low-degree
pairing, the generalized-harmonic head--socle block, the exact radical, and
the quotient, radical, and high-degree inverse kernels.
\Cref{sec:q-application} develops the radial $q$-monogenic application and
the degree-one obstructions.

\section{Meromorphic and logarithmic supersphere kernels}\label{sec:global-supersphere}

We work in the complex polynomial superspace
\[
 \mathcal P=\mathbb C[x_1,\ldots,x_m]\otimes\Lambda_{2n},
 \qquad M=m-2n,
\]
and write $\mathcal P_k$ for its homogeneous component of total degree $k$.
The super Dirac convention used throughout is
\begin{equation}\label{eq:sign-conventions}
 r=x^2=-R^2,\qquad
 \Delta=\partial_x^2=-\Delta_{\mathrm{std}},\qquad
 [\Delta,r]=4E+2M,
\end{equation}
where $R^2$ and $\Delta_{\mathrm{std}}$ are the quadratic form and
Laplacian in the standard superspace convention, and $E$ is the Euler
operator.  Thus multiplication by $r$ restricts to multiplication by $-1$
on the formal supersphere.  We put
\[
 \mathcal H_k=\ker(\Delta)\cap\mathcal P_k,
 \qquad \Ker_k(T)=\ker(T)\cap\mathcal P_k,
\]
and use $(a)_j=\Gamma(a+j)/\Gamma(a)$ for the rising Pochhammer symbol.
All pairings below are even supersymmetric bilinear pairings; evaluation at
zero is taken coefficientwise in the Grassmann algebra.  The simultaneous
sign change in \eqref{eq:sign-conventions} leaves the standard
$\mathfrak{sl}_2$ relations, Fischer decompositions, and generalized
harmonic kernels unchanged after replacing $R^2$ by $-r$ and
$\Delta_{\mathrm{std}}$ by $-\Delta$.

For two independent supervectors write
\begin{equation}\label{eq:global-invariants}
 r=x^2,\qquad s=\{x,y\},\qquad t=y^2,
 \qquad D(\rho;x,y)=1+\rho s+\rho^2rt.
\end{equation}
For a polynomial $P$ the dimensionally continued Pizzetti functional is
\begin{equation}\label{eq:Pizzetti}
 \mathcal I_M^{\SSph}(P)
 =\sum_{j\ge0}(-1)^j
   \frac{2\pi^{M/2}}{2^{2j}j!\,\Gamma(j+M/2)}
   (\Delta^jP)(0),
\end{equation}
where the sum is finite.  For $M\notin-2\mathbb N_0$ it is the unique
orthosymplectically invariant supersphere functional with the usual
normalization; in particular
\begin{equation}\label{eq:sphere-relation}
 \mathcal I_M^{\SSph}(rP)=-\mathcal I_M^{\SSph}(P).
\end{equation}
The sphere relation also holds at the exceptional superdimensions.  Indeed,
it is enough to take $P$ homogeneous.  If $P$ has degree $2j$, then
\[
 [\Delta^{j+1},r]P
 =\sum_{h=0}^{j}\Delta^h(4E+2M)\Delta^{j-h}P,
 \qquad E\Delta^{j-h}P=2h\Delta^{j-h}P,
\]
so the commutator in \eqref{eq:sign-conventions} gives
\begin{equation}\label{eq:sphere-relation-direct}
 \bigl(\Delta^{j+1}(rP)\bigr)(0)
 =4(j+1)\left(j+\frac M2\right)(\Delta^jP)(0).
\end{equation}
Substitution into \eqref{eq:Pizzetti}, together with the entire identity
$z/\Gamma(z+1)=1/\Gamma(z)$, yields
$\mathcal I_M^{\SSph}(rP)=-\mathcal I_M^{\SSph}(P)$ also when
$M\in-2\mathbb N_0$.  If $P$ has odd degree, both sides vanish, and the
general statement follows by homogeneous decomposition.  Only the uniqueness
assertion above is restricted to the nonexceptional case.  The generic
supersphere functional, harmonic orthogonality, and the superspace
Funk--Hecke theorem are developed in
\cite{DeBieSommen2007Spherical,CoulembierDeBieSommen2010Invariant}.
For an actual superspace, the parameter in \eqref{eq:Pizzetti} is its fixed
superdimension $M=m-2n$.  Later we use a different symbol $\mu$ for the
meromorphic continuation of the explicit invariant kernel coefficients.  We
do \emph{not} identify nearby values of $\mu$ with operators on the same fixed
coordinate superspace.  Every exceptional reproducing identity below is
proved directly at the fixed value $M=-2u$.

The following theorem gives the inverse Pizzetti kernel in a fixed
homogeneous degree.  It also fixes the normalization used in the global
generating series.

\begin{theorem}\label{thm:generic-homogeneous-kernel}
For $M\notin-2\mathbb N_0$ and $k\ge0$, define
\begin{align}
 H_{M,k}^{\mathrm{amb}}(x,y)
 &=\sum_{j=0}^{\lfloor k/2\rfloor}
 (-1)^j\frac{k!}{j!(k-2j)!}
 \frac{(M/2)_{k-j}}{(M/2)_k}
 (rt)^js^{k-2j},\label{eq:ambient-seed-coefficients}\\
 \gamma_{M,k}
 &=(-1)^k\frac{\Gamma(M/2+k)}{2k!\,\pi^{M/2}},\label{eq:ambient-gamma-closed}\\
 \mathsf G_{M,k}(x,y)&:=\gamma_{M,k}H_{M,k}^{\mathrm{amb}}(x,y).
 \label{eq:meromorphic-homogeneous-kernel}
\end{align}
Equivalently,
\begin{equation}\label{eq:ambient-gegenbauer}
 H_{M,k}^{\mathrm{amb}}(x,y)
 =(-1)^k\frac{k!}{(M/2)_k}(rt)^{k/2}
 C_k^{M/2}\!\left(-\frac{s}{2\sqrt{rt}}\right),
\end{equation}
where the square roots only abbreviate the polynomial expansion.  For every
$P\in\mathcal P_k$,
\begin{equation}\label{eq:superspace-ambient-reproduction}
 \mathcal I_{M,y}^{\SSph}\!\left(\mathsf G_{M,k}(x,y)P(y)\right)=P(x).
\end{equation}
\end{theorem}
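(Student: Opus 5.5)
The strategy is to split the reproduction identity \eqref{eq:superspace-ambient-reproduction} along the Fischer decomposition in the generic case $M\notin-2\mathbb N_0$,
\[
 \mathcal P_k=\bigoplus_{i\le k/2} r^i\mathcal H_{k-2i},
\]
and verify it on each summand $P=r^iH$ with $H\in\mathcal H_{k-2i}$. The equivalence \eqref{eq:ambient-gegenbauer} can be dispatched separately. Insert the explicit Gegenbauer expansion
\[
 C_k^{\lambda}(z)=\sum_j(-1)^j\frac{(\lambda)_{k-j}}{j!(k-2j)!}(2z)^{k-2j}
 \qquad\text{with } z=-s/(2\sqrt{rt}).
\]
The factor $(2z)^{k-2j}(rt)^{k/2}=(-1)^{k-2j}s^{k-2j}(rt)^j$ then absorbs the sign $(-1)^k$ and reproduces \eqref{eq:ambient-seed-coefficients} term by term.

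The first tool is the sphere relation \eqref{eq:sphere-relation}, which also holds for the $y$-integral. Since $t=y^2$ acts as $-1$ under $\mathcal I^{\SSph}_{M,y}$, we get
\[
 \mathcal I_{M,y}\bigl(\mathsf G_{M,k}(x,y)\,t^iH(y)\bigr)=(-1)^i\,\mathcal I_{M,y}\bigl(\mathsf G_{M,k}(x,y)H(y)\bigr).
\]
So the required statement reduces to the following: for harmonic $H$ of degree $\ell=k-2i$,
\[
 \mathcal I_{M,y}\bigl(\mathsf G_{M,k}(x,y)H(y)\bigr)=(-1)^i r^iH(x).
\]

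To compute this integral, expand $\mathsf G_{M,k}$ in powers of $s$ and apply the superspace Funk--Hecke theorem \cite{DeBieSommen2007Spherical,CoulembierDeBieSommen2010Invariant}. Equivalently, derive directly from the Pizzetti formula \eqref{eq:Pizzetti} that $\mathcal I_{M,y}(s^{p}t^{a}H(y))$ vanishes unless $p\ge\ell$ and $p\equiv\ell \pmod 2$, and that otherwise it equals an explicit constant times $r^{(p-\ell)/2}H(x)$. The direct derivation goes as follows. Since $\Delta_y H=0$, one has
\[
 \Delta_y\bigl(s^pH\bigr)=p(p-1)\,4r\,s^{p-2}H+(\text{terms vanishing by harmonicity and } \{\partial_y,\cdot\} \text{ identities}),
\]
using $\Delta_y s^p=4p(p-1)rs^{p-2}$ and $\langle\partial_y s,\partial_y H\rangle=2\,(x\cdot\partial_y)H$. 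This gives
\[
 \Delta_y^j\bigl(s^{\ell+2j}H\bigr)(0)=c_{j,\ell}\,r^j\,(x\cdot\partial_y)^\ell H(0)\propto r^jH(x),
\]
with $c_{j,\ell}$ a product of Pochhammer factors involving $M/2+\ell$. Summing over the terms of $\mathsf G_{M,k}$ yields a hypergeometric-type finite sum, which must evaluate to $(-1)^i r^i H(x)$.

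The main obstacle is this summation: showing that the coefficients $\frac{(M/2)_{k-j}}{(M/2)_k}$ combined with $\gamma_{M,k}$ give exactly $(-1)^i$, uniformly in $i$. I would first verify the harmonic component $i=0$, which is the classical zonal reproducing property (Gegenbauer normalization $\frac{k+\lambda}{\lambda}C_k^\lambda$ appears from the $\Gamma(M/2+k)$ factor). For $i>0$, I would try to avoid the direct sum. The plan is to use the generating function $D(\rho;x,y)^{-M/2}$ together with the observation that $\sum_k\rho^k\mathsf G_{M,k}$ is the Newtonian-type kernel $\Gamma(M/2)(1+\rho s+\rho^2rt)^{-M/2}/(2\pi^{M/2})$. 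Integration of its $\rho^k$-coefficient against $t^iH$ then reduces, via Chu--Vandermonde, to the $i=0$ case in lower degree. If that fails, I would fall back on evaluating the finite sum by Chu--Vandermonde with the explicit Pochhammer products.
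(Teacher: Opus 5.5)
Your argument is correct, but it is organized differently from the paper's proof. The paper never integrates $\mathsf G_{M,k}$ against a test polynomial. It imports the Pizzetti-normalized harmonic kernels $K^{\mathcal H}_{M,\ell}$ and notes that $r^jt^jK^{\mathcal H}_{M,k-2j}$ inverts the pairing on the row $r^j\mathcal H_{k-2j}$. It then identifies the orthogonal sum of these row kernels with $\gamma_{M,k}H^{\mathrm{amb}}_{M,k}$ by citing the normalized Gegenbauer connection formula.

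You instead test $\mathsf G_{M,k}$ directly on each row $t^iH$ via the monomial Funk--Hecke contraction. This is exactly the mechanism the paper uses later at the exceptional dimensions (\Cref{lem:homogeneous-monomial-contraction}, \Cref{lem:regular-row-action}, \eqref{eq:exceptional-regular-binomial}). Your route is self-contained and needs neither the zonal kernels nor the connection coefficients. The paper's route additionally exhibits $\mathsf G_{M,k}$ as a sum of row kernels.

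The summation you single out as the main obstacle closes in one line, so the generating-function detour and the separate $i=0$ step are unnecessary. Use $\Delta_y^{N}(s^{b}H)(0)=N!\,b!\,4^{N}r^{q_0}H(x)/q_0!$ with $N=(b+\ell)/2$ and $q_0=(b-\ell)/2$. Combined with the Pizzetti coefficient, this gives
\[
 \mathcal I_{M,y}^{\SSph}\bigl(\mathsf G_{M,k}(x,y)\,t^iH(y)\bigr)
 =\frac{r^iH(x)}{i!}\sum_{j=0}^{i}(-1)^j\binom{i}{j}\Bigl(\frac M2+k-i-j\Bigr)_i
 =r^iH(x).
\]
The last equality holds because the sum is the $i$-th finite difference of a degree-$i$ polynomial in $j$ with leading coefficient $(-1)^i$.

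There are some slips to fix in your sketch:
\begin{enumerate}[label=\textup{(\alph*)}]
\item The exponent must be $\Delta_y^{\ell+j}$, not $\Delta_y^{j}$.
\item The cross term $4p\,s^{p-1}(x\cdot\partial_y)H$ does not vanish; it is what produces $H(x)$.
\item This evaluation is independent of $M$. The dimension enters only through the Pizzetti prefactor $1/\Gamma(N+M/2)$.
\item No factor $(k+\lambda)/\lambda$ arises for $i=0$. Only the $s^k$ term contributes, and $\gamma_{M,k}$ cancels its integral directly.
\end{enumerate}
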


\begin{proof}
For $M\notin-2\mathbb N_0$ the Fischer decomposition is the direct sum
\[
 \mathcal P_k=\bigoplus_{j=0}^{\lfloor k/2\rfloor}
 r^j\mathcal H_{k-2j}.
\]
By \eqref{eq:sphere-relation} and harmonic orthogonality, distinct summands
are orthogonal for the Pizzetti pairing.  Let
$K_{M,\ell}^{\mathcal H}(x,y)$ denote the standard zonal reproducing kernel
of $\mathcal H_\ell$; its Gegenbauer expression and normalization are given
in \cite[Sections~4--5]{DeBieSommen2007Spherical} and
\cite[Section~3]{DeBieSommenWutzig2016}.  We normalize the harmonic kernel by
\[
 \mathcal I_{M,y}^{\SSph}\!\left(
 K_{M,\ell}^{\mathcal H}(x,y)H(y)\right)=H(x),
 \qquad H\in\mathcal H_\ell.
\]
The inverse kernel on the row $r^j\mathcal H_\ell$, where
$\ell=k-2j$, is then $r^jt^jK_{M,\ell}^{\mathcal H}(x,y)$.  Indeed, its
pairing with $t^jH(y)$ contains the factor $t^{2j}$, which equals $1$ under
the sphere functional, and the remaining harmonic integral returns
$r^jH(x)$.  The inverse kernel on $\mathcal P_k$ is therefore the finite
orthogonal sum of these row kernels.

With the above Pizzetti normalization of the harmonic kernels, the finite
connection identity is
\begin{align}\label{eq:normalized-row-connection}
 \sum_{j=0}^{\lfloor k/2\rfloor}
 r^jt^jK_{M,k-2j}^{\mathcal H}(x,y)
 &=\gamma_{M,k}(-1)^k\frac{k!}{(M/2)_k}(rt)^{k/2}\notag\\
 &\quad\times C_k^{M/2}\!\left(-\frac{s}{2\sqrt{rt}}\right)
 =\mathsf G_{M,k}(x,y).
\end{align}
The factor $\gamma_{M,k}$ is essential: for $k=0$, the left-hand side is the
constant Pizzetti reproducing kernel
$\Gamma(M/2)/(2\pi^{M/2})$, not $1$.  Formula
\eqref{eq:normalized-row-connection} is the normalized Gegenbauer connection
formula; see \cite[Chapter~IV, Sections~4.7--4.9]{Szego1975}, together with
the harmonic Pizzetti normalization in
\cite[Sections~4--5]{DeBieSommen2007Spherical}.  Expanding
\[
 C_k^\lambda(z)=\sum_{j=0}^{\lfloor k/2\rfloor}
 (-1)^j\frac{(\lambda)_{k-j}}{j!(k-2j)!}(2z)^{k-2j}
\]
gives \eqref{eq:ambient-seed-coefficients}, and the leading coefficient is
exactly \eqref{eq:ambient-gamma-closed}.  Hence the orthogonal sum of the row
inverse kernels is $\mathsf G_{M,k}$, proving
\eqref{eq:superspace-ambient-reproduction}.
\end{proof}

The coefficients in \eqref{eq:global-coefficient-formula} below are
meromorphic functions of the dimension appearing in the generic formula.  To
avoid confusing this continuation with the fixed superdimension of a
coordinate realization, we henceforth denote the continuation parameter by
$\mu$.  We write $\mathsf G_{\mu,k}$ and
$H_{\mu,k}^{\mathrm{amb}}$ for the expressions obtained from
\eqref{eq:meromorphic-homogeneous-kernel} and
\eqref{eq:ambient-seed-coefficients}, respectively, by replacing $M$ by
$\mu$.

The degreewise kernels admit a single closed generating function.  Its
Laurent expansion at a negative even value of the continuation parameter is
the source of the polynomial residue and logarithmic finite-part kernels used
below.

\begin{theorem}\label{thm:global-log-kernel}
As a formal power series in $\rho$ with coefficients meromorphic in $\mu$,
\begin{equation}\label{eq:global-generating-kernel}
 \mathscr G_\mu(\rho;x,y)
 :=\sum_{k=0}^{\infty}\rho^k\mathsf G_{\mu,k}(x,y)
 =\frac{\Gamma(\mu/2)}{2\pi^{\mu/2}}D(\rho;x,y)^{-\mu/2}.
\end{equation}
Let $u\in\mathbb N_0$ and put $H_u=\sum_{j=1}^u j^{-1}$, with $H_0=0$.
Here $\gamma_{\!E}$ denotes Euler's constant.  Near $\mu=-2u$ one has
\begin{equation}\label{eq:global-laurent-expansion}
 \mathscr G_\mu
 =\frac{\mathscr R_u}{\mu+2u}+\mathscr L_u+O(\mu+2u),
\end{equation}
where
\begin{align}
 \mathscr R_u(\rho;x,y)
 &=\frac{(-1)^u\pi^u}{u!}D(\rho;x,y)^u,
 \label{eq:global-polynomial-residue}\\
 \mathscr L_u(\rho;x,y)
 &=\frac{(-1)^u\pi^u}{2u!}D(\rho;x,y)^u
 \left(H_u-\gamma_{\!E}-\log\pi-\log D(\rho;x,y)\right).
 \label{eq:global-log-finite-part}
\end{align}
Here $\log D$ is the formal series determined by $D(0;x,y)=1$.  In
particular, the residue has degree $2u$ in $\rho$, whereas the logarithmic
finite part has homogeneous coefficients in every degree.
\end{theorem}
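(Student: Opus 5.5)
The generating identity \eqref{eq:global-generating-kernel} is a coefficientwise statement, and I will prove it with the Gegenbauer generating function. Start from
\[
 (1-2zw+w^2)^{-\lambda}=\sum_{k\ge0}C_k^\lambda(z)w^k,
\]
which, once each $C_k^\lambda(z)w^k$ is expanded, is a polynomial identity in $z$ and $w$ for every fixed degree $k$. Set $\lambda=\mu/2$, $w=-\rho\sqrt{rt}$ and $z=-s/(2\sqrt{rt})$. Then $2zw=\rho s$ and $w^2=\rho^2rt$, so the left side becomes $D(\rho;x,y)^{-\mu/2}$. On the right, $C_k^{\mu/2}(z)w^k=(-1)^k\rho^k(rt)^{k/2}C_k^{\mu/2}(-s/(2\sqrt{rt}))$, and only even powers of $\sqrt{rt}$ survive, consistent with the expansion used in the proof of \cref{thm:generic-homogeneous-kernel}.

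Comparing with \eqref{eq:ambient-gegenbauer} gives $\rho^kH^{\mathrm{amb}}_{\mu,k}=\rho^k\frac{k!}{(\mu/2)_k}(-1)^k(rt)^{k/2}C_k^{\mu/2}(\cdot)$. Multiplying by $\gamma_{\mu,k}$ from \eqref{eq:ambient-gamma-closed}, the factors $(-1)^k$, $k!$ and $\Gamma(\mu/2+k)/(\mu/2)_k=\Gamma(\mu/2)$ cancel. This leaves
\[
 \mathsf G_{\mu,k}=\frac{\Gamma(\mu/2)}{2\pi^{\mu/2}}\,[\rho^k]\,D^{-\mu/2}.
\]
To avoid manipulating square roots, I will check the same identity directly from the double binomial expansion
\[
 D^{-\mu/2}=\sum_{a,b}\frac{(\mu/2)_{a+b}}{a!\,b!}(-1)^{a+b}(\rho s)^a(\rho^2rt)^b.
\]
The coefficient of $(rt)^js^{k-2j}\rho^k$ is $(-1)^{k-j}(\mu/2)_{k-j}/(j!(k-2j)!)$. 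This matches $\gamma_{\mu,k}$ times the coefficient in \eqref{eq:ambient-seed-coefficients} after dividing out $\Gamma(\mu/2)/(2\pi^{\mu/2})$. This is a clean check of signs, and the signs are the one place an error could creep in.

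For the Laurent expansion, write $\varepsilon=\mu+2u$ and treat each $\rho$-coefficient separately; each is a polynomial in $s$ and $rt$ with coefficients meromorphic in $\mu$. Define the formal series $D^{-\mu/2}=\exp(-\tfrac{\mu}{2}\log D)$, with $\log D\in\rho\,\mathbb C[s,rt][[\rho]]$. This is entire in $\mu$ coefficientwise and agrees with the binomial expansion. Expanding in $\varepsilon$ gives $D^{-\mu/2}=D^{u}\bigl(1-\tfrac{\varepsilon}{2}\log D+O(\varepsilon^2)\bigr)$ and $\pi^{-\mu/2}=\pi^u\bigl(1-\tfrac{\varepsilon}{2}\log\pi+\cdots\bigr)$. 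For the Gamma factor I use the standard Laurent expansion at the pole $-u$:
\[
 \Gamma(-u+\delta)=\frac{(-1)^u}{u!}\Bigl(\frac1\delta+H_u-\gamma_{\!E}+O(\delta)\Bigr),\qquad \delta=\varepsilon/2.
\]
This can be derived from $\Gamma(\delta)=1/\delta-\gamma_{\!E}+O(\delta)$ together with $\Gamma(-u+\delta)=\Gamma(1+\delta)/\bigl(\delta\prod_{j=1}^u(\delta-j)\bigr)$, where the logarithmic derivative of the product supplies $H_u$.

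Multiplying the three expansions, the factor $\tfrac12$ in front combines with $1/\delta=2/\varepsilon$ to give the residue $\frac{(-1)^u\pi^u}{u!}D^u$, which is \eqref{eq:global-polynomial-residue}. The constant term is $\tfrac12\frac{(-1)^u\pi^u}{u!}D^u(H_u-\gamma_{\!E})$ plus the cross terms $\tfrac{1}{2}\cdot\frac{(-1)^u}{u!}\cdot\frac{2}{\varepsilon}\cdot\pi^uD^u\cdot(-\tfrac{\varepsilon}{2})(\log\pi+\log D)$. These assemble into \eqref{eq:global-log-finite-part}.

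$D^u$ is a polynomial of degree $2u$ in $\rho$. By contrast, $D^u\log D$ has nonzero coefficients in all degrees; I will justify this by noting that, as a series in $\rho$, $\log D$ is not a polynomial, since $D$ is a nonconstant polynomial with $D(0)=1$. The main obstacle is only bookkeeping: the signs in the Gegenbauer substitution and the $\tfrac12$ versus $\tfrac{\varepsilon}{2}$ factors in the Laurent product. The direct binomial coefficient check handles the first.
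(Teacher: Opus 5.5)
Your argument is essentially the paper's proof. The paper also matches the coefficient of $\rho^k(rt)^js^{k-2j}$ in the double binomial expansion of $D^{-\mu/2}$ against $\gamma_{\mu,k}H^{\mathrm{amb}}_{\mu,k}$. It then obtains the Laurent expansion by multiplying the expansions of $\Gamma(-u+\delta)$, $\pi^{u-\delta}$ and $D^{u-\delta}$. Your binomial check and your Laurent bookkeeping (including the $\exp(-\tfrac{\mu}{2}\log D)$ device and the derivation of the $\Gamma$ expansion) are correct, and on their own they constitute the proof.

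The preliminary Gegenbauer paragraph has a sign slip, so you should fix it or delete it. With $w=-\rho\sqrt{rt}$ and $z=-s/(2\sqrt{rt})$ you get $2zw=\rho s$. Hence $1-2zw+w^2=1-\rho s+\rho^2rt=D(-\rho;x,y)$, not $D(\rho;x,y)$. Your stated conclusion comes out right only because this error is compensated by mishandling the extra $(-1)^k$ from $w^k$. The correct substitution is $w=\rho\sqrt{rt}$; then the $(-1)^k$ in $\gamma_{\mu,k}$ cancels the $(-1)^k$ in \eqref{eq:ambient-gegenbauer}.

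Your justification of the last sentence is too weak. ``$\log D$ is not a polynomial'' only gives infinitely many nonzero coefficients, and in fact $[\rho^0](D^u\log D)=0$. To show that $\mathscr L_u$ has a nonzero coefficient in every degree, look at the pure $s^k$ term $(-1)^k\Gamma(\mu/2+k)/(2\pi^{\mu/2}k!)$ of $\mathsf G_{\mu,k}$.
\begin{itemize}
\item For $k\ge u+1$ this term is regular at $\mu=-2u$, with nonzero value $(-1)^k\pi^u(k-u-1)!/(2\,k!)$.
\item For $k\le u$ its finite part is $(-1)^u\pi^u(H_{u-k}-\gamma_{\!E}-\log\pi)/(2\,k!\,(u-k)!)$. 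This is nonzero because $H_n$ is increasing and $H_2<\gamma_{\!E}+\log\pi<H_3$.
\end{itemize}
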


\begin{proof}
Multiplying \eqref{eq:ambient-seed-coefficients} by
\eqref{eq:ambient-gamma-closed} yields
\begin{equation}\label{eq:global-coefficient-formula}
 \mathsf G_{\mu,k}(x,y)
 =\sum_{j=0}^{\lfloor k/2\rfloor}
 \frac{(-1)^{k+j}\Gamma(\mu/2+k-j)}
 {2\pi^{\mu/2}j!(k-2j)!}(rt)^js^{k-2j}.
\end{equation}
To identify the coefficient of $\rho^k(rt)^js^{k-2j}$ on the right-hand
side of \eqref{eq:global-generating-kernel}, first choose $k-j$ factors
from the binomial expansion of $D^{-\mu/2}$ and then choose $j$ copies of
$rt$ among them.  This gives
\[
 \frac{\Gamma(\mu/2)}{2\pi^{\mu/2}}
 (-1)^{k-j}\frac{(\mu/2)_{k-j}}{(k-j)!}
 \binom{k-j}{j}
 =\frac{(-1)^{k+j}\Gamma(\mu/2+k-j)}
 {2\pi^{\mu/2}j!(k-2j)!},
\]
which is exactly the coefficient in
\eqref{eq:global-coefficient-formula}.  This proves the generating identity.  Now put
$\mu=-2u+\varepsilon$ and
$\delta=\varepsilon/2$.  The standard expansions
\[
 \Gamma(-u+\delta)
 =\frac{(-1)^u}{u!}\left(\frac1\delta+H_u-\gamma_{\!E}+O(\delta)\right),
\]
\[
 \pi^{u-\delta}=\pi^u(1-\delta\log\pi+O(\delta^2)),\qquad
 D^{u-\delta}=D^u(1-\delta\log D+O(\delta^2))
\]
give \eqref{eq:global-laurent-expansion}--\eqref{eq:global-log-finite-part}.
\end{proof}

The preceding logarithm has a direct spherical interpretation.

\begin{remark}
On the formal supersphere, $r=t=-1$ and
$s=-2\langle x,y\rangle$, so $D=1-2\rho\langle x,y\rangle+\rho^2$ is the
classical Poisson denominator.  Negative even superdimension changes its
complex power into a polynomial residue and a logarithmic finite part.  The
next section shows that their homogeneous coefficients are exactly the
inverse kernels of the exceptional Fischer layers.
\end{remark}

\section{Exceptional Fischer geometry}\label{sec:exceptional-geometry}

Throughout this section, $M=-2u$ and the bosonic part is nonzero.  The
purely fermionic Fischer decomposition is finite and has a different
structure; it is not considered here; see \cite{DeBieEelbodeSommen2009}.

We begin with the elementary obstruction that necessitates a residue
normalization in low degree.

\begin{proposition}\label{prop:exceptional-pizzetti-obstruction}
Let $M=-2u$ with $u\in\mathbb N_0$.  If $0\leq k\leq u$, then
\begin{equation}\label{eq:exceptional-pizzetti-zero}
 \mathcal I_{-2u}^{\SSph}(P_kQ_k)=0
 \qquad
 \text{for all }P_k,Q_k\in\mathcal P_k.
\end{equation}
Hence the unrenormalized Pizzetti pairing is identically zero on degree $k$,
and no bidegree-$(k,k)$ kernel can reproduce every nonzero homogeneous
polynomial of degree $k$ by direct substitution in $M$.
\end{proposition}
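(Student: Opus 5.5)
The plan is to compute directly from the definition \eqref{eq:Pizzetti} at $M=-2u$. The product $P_kQ_k$ is homogeneous of degree $2k$. Since $(\Delta^jF)(0)$ vanishes for a homogeneous $F$ of degree $2k$ unless $j=k$, only the single term $j=k$ survives:
\[
 \mathcal I_{-2u}^{\SSph}(P_kQ_k)
 =(-1)^k\frac{2\pi^{-u}}{2^{2k}k!\,\Gamma(k-u)}\,\bigl(\Delta^k(P_kQ_k)\bigr)(0).
\]
Here the coefficient $1/\Gamma(k-u)$ is read as the entire function $1/\Gamma$. The paper already uses this interpretation when it continues the sphere relation via $z/\Gamma(z+1)=1/\Gamma(z)$.

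For $0\le k\le u$, the argument $k-u$ lies in $-\mathbb N_0$, so $1/\Gamma(k-u)=0$. The scalar $(\Delta^k(P_kQ_k))(0)$ is a finite quantity for any fixed polynomials (evaluated coefficientwise in the Grassmann algebra). Hence the product vanishes identically, which gives \eqref{eq:exceptional-pizzetti-zero}. This already covers the boundary case $k=u$, where $\Gamma(0)$ has a pole, and $k=0$ with $u=0$.

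For the consequence, suppose some kernel $G(x,y)$ of bidegree $(k,k)$ satisfied $\mathcal I_{-2u,y}^{\SSph}(G(x,y)P(y))=P(x)$ for all $P\in\mathcal P_k$. Expand $G=\sum_\alpha A_\alpha(x)B_\alpha(y)$ with $B_\alpha\in\mathcal P_k$. Then the left side equals $\sum_\alpha \pm A_\alpha(x)\,\mathcal I_{-2u}^{\SSph}(B_\alpha P)$, and every term vanishes by \eqref{eq:exceptional-pizzetti-zero}. We only need to track super signs when moving the odd generators of $A_\alpha(x)$ past the $y$-functional, and linearity over the $x$-variables handles this. So the left side is $0\neq P(x)$ for any nonzero $P$.

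The only point needing care is the convention that the Pizzetti coefficient at exceptional $M$ means the value of the entire function $1/\Gamma$, rather than a divergent expression. I will state this explicitly; it matches the paper's own continuation argument for \eqref{eq:sphere-relation}. There is no real obstacle beyond this.
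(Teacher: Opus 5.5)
Your proposal is correct and follows the paper's proof: only the $j=k$ term of the Pizzetti sum survives on a degree-$2k$ product, and its coefficient contains $1/\Gamma(k-u)=0$ for $0\le k\le u$. Your expansion $G=\sum_\alpha A_\alpha(x)B_\alpha(y)$ simply spells out the paper's one-line remark that a reproducing kernel is incompatible with an identically zero pairing.
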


\begin{proof}
The product $P_kQ_k$ is homogeneous of degree $2k$, so in
\eqref{eq:Pizzetti} only the term with $j=k$ can survive evaluation at the
origin.  Its coefficient contains
\[
 \frac{1}{\Gamma(k+M/2)}=\frac{1}{\Gamma(k-u)}=0
 \qquad(0\leq k\leq u).
\]
This proves \eqref{eq:exceptional-pizzetti-zero}.  A reproducing kernel would
make the pairing nonzero on every nonzero vector, which is impossible.
\end{proof}

For a homogeneous polynomial $F$ of degree $2k$, the formal dimension
continuation of the Pizzetti functional is
\begin{equation}\label{eq:pizzetti-homogeneous-coefficient}
 \mathcal I_M^{\SSph}(F)
 =(-1)^k\frac{2\pi^{M/2}}{4^k k!\,\Gamma(k+M/2)}
   (\Delta^kF)(0).
\end{equation}
At $M=-2u$ and $k\leq u$ this coefficient has a simple zero, rather than an
essential degeneracy.  It therefore defines the following canonical
fixed-realization residue functional.

\begin{definition}\label{def:pizzetti-residue}
Let the fixed superspace have superdimension $M=-2u$, and let $0\leq k\leq u$.  Write
\[
 c_k(\mu)=(-1)^k\frac{2\pi^{\mu/2}}
 {4^k k!\,\Gamma(k+\mu/2)}.
\]
On homogeneous polynomials of degree $2k$ define the degreewise residue by
\begin{equation}\label{eq:pizzetti-residue-definition}
 \mathcal R_{u,k}(F)
 :=c_k'(-2u)(\Delta^kF)(0).
\end{equation}
Thus only the scalar Pizzetti normalization is continued; the Laplacian is the
one belonging to the fixed superspace.  Explicitly,
\begin{equation}\label{eq:pizzetti-residue-explicit}
 \mathcal R_{u,k}(F)
 =(-1)^u\frac{(u-k)!}{4^k k!\,\pi^u}
   (\Delta^kF)(0).
\end{equation}
Equivalently, this is the coefficient of $\varepsilon$ in
$c_k(-2u+\varepsilon)$, but it does not assert the existence of a nearby
coordinate realization of superdimension $-2u+\varepsilon$.
\end{definition}

The residue is closely related to the normalized supersphere functional of
\cite[Section~3]{GuzmanAdan2021CK}.  On polynomials of degree at most
$2u+1$, that functional is
\begin{equation}\label{eq:exceptional-normalized-integral}
 \mathcal N_u(F)
 :=\frac1{u!}\sum_{j=0}^{u}
 \frac{(u-j)!}{j!}\left(\frac{\Delta}{4}\right)^jF(0).
\end{equation}
For a homogeneous polynomial $F$ of degree $2k$, $0\leq k\leq u$, only the
term $j=k$ survives and therefore
\begin{equation}\label{eq:residue-normalized-comparison}
 \mathcal R_{u,k}(F)
 =(-1)^u\frac{u!}{\pi^u}\,\mathcal N_u(F).
\end{equation}
The signs in \eqref{eq:exceptional-normalized-integral} reflect the
convention $\Delta=-\Delta_{\mathrm{std}}$ in
\eqref{eq:sign-conventions}.  Thus the low-degree residue is a
degree-independent scalar multiple of the previously defined normalized
integral.  The additional content below consists of the nondegeneracy of the induced
bilinear form on each full homogeneous space, its inverse kernel, and the
extension to degrees for
which \eqref{eq:exceptional-normalized-integral} is not defined.

The residue form is diagonal with respect to the ordinary Fischer rows,
and its restriction to each row can be computed explicitly.

\begin{lemma}\label{lem:exceptional-residue-rows}
Let $0\leq k\leq u$, let $\ell=k-2j$, and take
$H,H'\in\mathcal H_\ell$.  Then distinct harmonic rows are orthogonal for the
residue pairing, while on the row $r^j\mathcal H_\ell$ one has
\begin{equation}\label{eq:exceptional-residue-row-scalar}
 \mathcal R_{u,k}\!\left(r^{2j}HH'\right)
 =(-1)^u\frac{(u-\ell)!}{4^\ell\ell!\,\pi^u}
   \mathsf F_\ell(H,H'),
 \qquad
 \mathsf F_\ell(H,H')=(\Delta^\ell(HH'))(0).
\end{equation}
In particular, every Fischer row carries a nonzero scalar multiple of the
Fischer contraction.
\end{lemma}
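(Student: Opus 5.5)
The plan is to reduce everything to the fixed-superspace iteration identity \eqref{eq:sphere-relation-direct}, which was proved directly from the commutator $[\Delta,r]=4E+2M$ and so holds at $M=-2u$ without any continuation in $M$. A general pair of Fischer row elements in $\mathcal P_k$ has the form $r^jH$ and $r^{j'}H'$, with $H\in\mathcal H_\ell$, $H'\in\mathcal H_{\ell'}$, and $2j+\ell=2j'+\ell'=k$. Their product $r^{j+j'}HH'$ has degree $2k$, so by \eqref{eq:pizzetti-residue-explicit} I only need to evaluate $(\Delta^k(r^{j+j'}HH'))(0)$.

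\emph{Step 1 (stripping powers of $r$).} I apply \eqref{eq:sphere-relation-direct} repeatedly with $P=r^{a-1}HH'$, where $P$ has degree $2(k-a)$, for $a=j+j',\dots,1$. Setting $m=(\ell+\ell')/2=k-j-j'$, this gives
\[
 (\Delta^k(r^{j+j'}HH'))(0)=\prod_{i=m}^{k-1}4(i+1)(i-u)\;(\Delta^{m}(HH'))(0).
\]
Since $k\le u$, every factor $i-u$ is nonzero and the product is a nonzero scalar. When $\ell=\ell'$, so that $j=j'$ and $m=\ell$, the product equals $4^{2j}\frac{k!}{\ell!}\frac{(u-\ell)!}{(u-k)!}$; the sign is $(-1)^{2j}=1$. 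Multiplying this by the prefactor $(-1)^u(u-k)!/(4^kk!\pi^u)$ from \eqref{eq:pizzetti-residue-explicit}, the factors $k!$ and $(u-k)!$ cancel and the power of $4$ becomes $4^{-\ell}$. This yields exactly \eqref{eq:exceptional-residue-row-scalar}. Because $\ell\le k\le u$, we have $(u-\ell)!\neq0$, so the row scalar is nonzero, which gives the final assertion of the lemma.

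\emph{Step 2 (orthogonality of distinct rows).} By Step 1 it suffices to show that $(\Delta^m(HH'))(0)=0$ whenever $\ell\ne\ell'$, where $m=(\ell+\ell')/2$. For this I use the super Leibniz rule $\Delta(fg)=(\Delta f)g+2\langle\partial_xf,\partial_xg\rangle+f\Delta g$, with the appropriate Grassmann signs, which follows from $\partial_x^2=\Delta$. I also use that the components $\partial_{x_i}H$ of a harmonic $H$ are again harmonic, because $\Delta$ commutes with the $\partial_{x_i}$.

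Induction on $m$ then shows that $\Delta^m(HH')$ is a linear combination of full contractions of $\partial^{a}H$ against $\partial^{a}H'$ with $a\le m$. Each application of $\Delta$ removes at most one derivative from each factor, and never two derivatives from the same factor, since the terms containing $\Delta H$ or $\Delta H'$ vanish. Evaluating at $0$ kills every term unless both factors are differentiated down to degree $0$, which forces $\ell=a=\ell'$. Hence distinct rows are orthogonal.

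The main obstacle I expect is Step 2, specifically keeping track of the super signs and the symplectic part of the contraction in the Leibniz rule. My first attempt would be to phrase the argument grading-free: the operator $\Delta$ is a second-order even operator whose symbol is the orthosymplectic form, so $\Delta(fg)-(\Delta f)g-f(\Delta g)$ is a bidifferential operator of order $(1,1)$. That is all the degree count uses, so the precise sign conventions should not affect the conclusion.
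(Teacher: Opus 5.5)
Your proof is correct, but it takes a different route from the paper. The paper computes nothing at the level of $\Delta$. It uses \eqref{eq:residue-normalized-comparison} to replace $\mathcal R_{u,k}$ by the normalized functional $\mathcal N_u$ of \cite{GuzmanAdan2021CK}, and imports both the sphere relation and harmonic orthogonality for $\mathcal N_u$ from that reference. It then reads off $\mathcal N_u(HH')$ from \eqref{eq:exceptional-normalized-integral}.

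You derive both ingredients yourself at the fixed superspace:
\begin{itemize}
\item The radial stripping comes from the commutator identity \eqref{eq:sphere-relation-direct}. Your product $\prod_{i=m}^{k-1}4(i+1)(i-u)$ is right, and it shows exactly where $k\le u$ enters.
\item The vanishing of $(\Delta^m(HH'))(0)$ for $\ell\ne\ell'$ follows because $\Delta(fg)-(\Delta f)g-f\Delta g$ is a constant-coefficient bidifferential operator of bi-order $(1,1)$.
\end{itemize}
Your version is self-contained and matches the paper's own insistence on working only at the fixed value $M=-2u$. The paper's version is shorter and identifies the residue with an existing normalized integral, but it rests on the cited properties of $\mathcal N_u$.

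Three small remarks:
\begin{itemize}
\item $P=r^{a-1}HH'$ has degree $2(m+a-1)$, not $2(k-a)$. Your product formula is unaffected.
\item In Step~2 you can sharpen ``at most'': the $(2,0)$ and $(0,2)$ parts kill the harmonic factors at every stage. So exactly $m$ derivatives land on each of $H$ and $H'$, and one of the degrees $\ell-m$, $\ell'-m$ is negative when $\ell\ne\ell'$.
\item The paper's proof also records, by citation, that $\mathsf F_\ell$ is nondegenerate on $\mathcal H_\ell$; this is what \Cref{thm:exceptional-residue-reproduction} uses. That claim goes beyond the lemma as stated, and your argument does not supply it.
\end{itemize}
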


\begin{proof}
By the comparison \eqref{eq:residue-normalized-comparison}, it is
enough to evaluate the normalized functional $\mathcal N_u$.  This
functional satisfies the sphere relation and harmonic orthogonality on its
domain \cite[Section~3]{GuzmanAdan2021CK}.  Hence distinct Fischer rows are
orthogonal, while
\[
 \mathcal N_u(r^{2j}HH')=\mathcal N_u(HH')
 =\frac{(u-\ell)!}{u!\,4^\ell\ell!}
   (\Delta^\ell(HH'))(0).
\]
Multiplication by $(-1)^uu!/\pi^u$ gives
\eqref{eq:exceptional-residue-row-scalar}.  The Fischer contraction on
$\mathcal H_\ell$ is nondegenerate because the ordinary Fischer pairing is
nondegenerate and the Fischer decomposition is direct in degree
$\ell\le u$; see \cite{DeBieSommen2007Spherical,LavickaSmid2015}.  The
scalar in \eqref{eq:exceptional-residue-row-scalar} is nonzero.
\end{proof}

The row calculation yields a nondegenerate residue form and an explicit
inverse kernel on every low-degree homogeneous space.

\begin{theorem}\label{thm:exceptional-residue-reproduction}
Let $M=-2u$, $0\leq k\leq u$, and let
$H_{-2u,k}^{\mathrm{res}}(x,y)$ denote the coefficientwise specialization
\begin{equation}\label{eq:exceptional-ambient-limit}
 H_{-2u,k}^{\mathrm{res}}(x,y)
 :=\lim_{\mu\to-2u}H_{\mu,k}^{\mathrm{amb}}(x,y).
\end{equation}
This limit exists.  The residue pairing
\begin{equation}\label{eq:exceptional-residue-pairing}
 (P,Q)\longmapsto \mathcal R_{u,k}(PQ)
\end{equation}
is nondegenerate on $\mathcal P_k$, and
\begin{equation}\label{eq:exceptional-residue-reproduction}
 \widehat\gamma_{u,k}\,
 \mathcal R_{u,k,y}\!\left(
 H_{-2u,k}^{\mathrm{res}}(x,y)P(y)
 \right)=P(x),
 \qquad
 \widehat\gamma_{u,k}
 =(-1)^u\frac{\pi^u}{k!(u-k)!}.
\end{equation}
Equivalently,
\begin{equation}\label{eq:exceptional-delta-reproduction}
 \frac{1}{4^k(k!)^2}
 \left.\Delta_y^k\!\left(
 H_{-2u,k}^{\mathrm{res}}(x,y)P(y)
 \right)\right|_{y=0}
 =P(x).
\end{equation}
Thus the zero of the ordinary supersphere functional in degrees $k\leq u$ is
removed by a canonical first residue, without passing to a quotient.
\end{theorem}

\begin{proof}
The expansion of the reciprocal gamma function gives
\eqref{eq:pizzetti-residue-explicit}.  For $k\leq u$, the ordinary Fischer
decomposition is direct because these degrees lie below the first exceptional
index $u+2$; see \cite[Theorem~1]{LavickaSmid2015}.  By
\Cref{lem:exceptional-residue-rows}, the residue form is the orthogonal sum of
nonzero multiples of the Fischer contractions on the rows
$r^j\mathcal H_{k-2j}$, and is therefore nondegenerate.

Let $K_{u,k}^{\mathrm{row}}$ be the inverse kernel obtained by summing the
inverse kernels of these orthogonal rows.  In the normalized connection
identity \eqref{eq:normalized-row-connection}, both the generic Pizzetti form
and its row inverse kernels have a simple zero/pole at $\mu=-2u$ when
$k\le u$.  Taking the coefficient of $(\mu+2u)^{-1}$ in this finite
polynomial identity gives
\[
 K_{u,k}^{\mathrm{row}}
 =\operatorname*{res}_{\mu=-2u}\gamma_{\mu,k}\,
  H_{-2u,k}^{\mathrm{res}}
 =\widehat\gamma_{u,k}H_{-2u,k}^{\mathrm{res}}.
\]
This step continues only the explicit scalar row normalizations.  The
reproducing statement itself is on the fixed exceptional superspace and
follows from the direct row calculation in
\Cref{lem:exceptional-residue-rows}; it is not obtained by varying the
coordinate superdimension.  This proves
\eqref{eq:exceptional-residue-reproduction}.  Finally, multiplying the
constants in \eqref{eq:pizzetti-residue-explicit} and
\eqref{eq:exceptional-residue-reproduction} yields
$1/(4^k(k!)^2)$, which is exactly
\eqref{eq:exceptional-delta-reproduction}.
\end{proof}

We shall use the following polarization identity to evaluate the
head--socle block of the exceptional pairing.

\begin{lemma}\label{lem:polarized-laplacian}
Let $A\in\mathcal P_{2a+h}$ and $H\in\mathcal H_h$.  Assume that
$\Delta^aA\in\mathcal H_h$.  Then
\begin{equation}\label{eq:polarized-laplacian}
 \left.\Delta^{a+h}(AH)\right|_0
 =\frac{(a+h)!}{a!h!}\,
   \mathsf F_h(\Delta^aA,H),
 \qquad
 \mathsf F_h(U,V)=(\Delta^h(UV))(0).
\end{equation}
\end{lemma}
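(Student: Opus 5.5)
Expand $\Delta^{a+h}(AH)$ with the super Leibniz rule for the Laplacian and evaluate at the origin, where only bidegree-balanced terms survive. Since $\Delta$ is a second-order operator with symbol the quadratic form, write
\[
 \Delta(FG)=(\Delta F)G+2\sum \partial F\cdot\partial G+F(\Delta G),
\]
where the middle term is the invariant gradient pairing $\langle\partial F,\partial G\rangle$ with the appropriate Grassmann signs. Because $\Delta H=0$, no term can place a full Laplacian on $H$. The plan is to reorganize the iterated expansion using the operators $\Delta_1$ (acting on the first factor), $\Delta_2$ (acting on the second factor) and $\Gamma_{12}=2\langle\partial^{(1)},\partial^{(2)}\rangle$. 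On the tensor product $\mathcal P\otimes\mathcal P$ these three operators supercommute. Then
\[
 \Delta^{a+h}(AH)=\mu\bigl((\Delta_1+\Gamma_{12}+\Delta_2)^{a+h}(A\otimes H)\bigr),
\]
with $\mu$ the multiplication map. Since $\Delta_2H=0$, the multinomial expansion reduces to
\[
 \sum_{p+q=a+h}\binom{a+h}{p}\Delta_1^{p}\Gamma_{12}^{q}(A\otimes H).
\]
Evaluation at $0$ forces the remaining degrees on both factors to vanish. As $H$ has degree $h$, we need $q=h$, and then $p=a$. This leaves the single term
\[
 \binom{a+h}{h}\bigl(\Gamma_{12}^{h}(\Delta^aA\otimes H)\bigr)(0).
\]

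Next, identify $\Gamma_{12}^h(U\otimes V)(0)$ for $U=\Delta^aA$ and $V=H$, both of degree $h$ and with $U$ harmonic (this is the hypothesis $\Delta^aA\in\mathcal H_h$). Apply the same expansion to $\mathsf F_h(U,V)=\Delta^h(UV)(0)$. Because $\Delta U=\Delta V=0$, only $\Gamma_{12}^h$ survives, so
\[
 \mathsf F_h(U,V)=\Gamma_{12}^h(U\otimes V)(0).
\]
Combining the two steps gives $\Delta^{a+h}(AH)(0)=\binom{a+h}{h}\mathsf F_h(\Delta^aA,H)$. This is exactly the constant $(a+h)!/(a!\,h!)$ claimed in the statement.

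The main obstacle is justifying the supercommutation of $\Delta_1$, $\Gamma_{12}$ and $\Delta_2$ in the superspace setting, together with the correct Grassmann signs. Only then is the binomial expansion legitimate. I would check this directly from the coordinate form of $\partial_x^2$: bosonic derivatives contribute their usual terms, and fermionic pairs contribute the symplectic terms. All three operators are even, built from derivatives that graded-commute, and no multiplication operators occur, so they commute. A second subtle point is the ordering of $\Delta_1^a$ and $\Gamma_{12}^h$. This ordering is harmless: since the operators commute, $\Delta_1^a$ may be applied first to produce $\Delta^aA$, and only then is harmonicity of $\Delta^aA$ needed, so $\Delta^aA$ enters through $\mathsf F_h$.
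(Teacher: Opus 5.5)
Your proposal is correct and follows the paper's proof essentially step for step. Your $\Gamma_{12}$ is the paper's $2\mathcal C_{x,y}=\Delta_{x+y}-\Delta_x-\Delta_y$, and the argument is the same: a multinomial expansion in which harmonicity of $H$ kills every $\Delta_2$ term and homogeneity keeps only $a$ copies of $\Delta_1$ and $h$ copies of the mixed term. Your explicit derivation of $\mathsf F_h(U,V)=\Gamma_{12}^h(U\otimes V)(0)$ from the same expansion spells out the step the paper only calls ``polarization of the Fischer contraction''.
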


\begin{proof}
Set
\[
 \mathcal C_{x,y}:=\tfrac12
 (\Delta_{x+y}-\Delta_x-\Delta_y),
\]
where $x$ and $y$ are independent supervectors.  Polarization of the
diagonal product gives
\[
 \left.\Delta^{a+h}(AH)\right|_0
 =\left.(\Delta_x+\Delta_y+2\mathcal C_{x,y})^{a+h}
   (A(x)H(y))\right|_{x=y=0}.
\]
Every term containing $\Delta_y$ vanishes because $H$ is harmonic.  Homogeneity forces the surviving term to contain exactly $a$
copies of $\Delta_x$ and $h$ copies of $2\mathcal C_{x,y}$; all other terms
have positive residual degree in at least one variable.  Its multinomial
coefficient is $(a+h)!/(a!h!)$.  Since $\Delta^aA$ and $H$ are harmonic of
degree $h$, polarization of the Fischer contraction gives
\[
 (2\mathcal C_{x,y})^h
 \bigl((\Delta_x^aA)(x)H(y)\bigr)\big|_{x=y=0}
 =\mathsf F_h(\Delta^aA,H).
\]
Combining these identities proves \eqref{eq:polarized-laplacian}.
\end{proof}

The next lemma supplies the missing pairing between the head and socle of
an exceptional generalized harmonic module.

\begin{lemma}\label{lem:confluent-exceptional-pairing}
Let $M=-2u$, let
\[
 \ell\in I_M=\{u+2,\ldots,2u+2\},\qquad
 h=2u+2-\ell,\qquad a=u-h+1=\ell-u-1,
\]
and let $\widetilde{\mathcal H}_\ell=\Ker_\ell(\Delta r\Delta)$ be the
generalized harmonic module of \cite{LavickaSmid2015}.  The ordinary Pizzetti
form
\[
 \mathsf B_\ell(P,Q)=\mathcal I_{-2u}^{\SSph}(PQ)
\]
is nondegenerate on $\widetilde{\mathcal H}_\ell$.  More precisely,
\[
 \mathcal H_\ell^0=r^a\mathcal H_h
 \subset\mathcal H_\ell\subset\widetilde{\mathcal H}_\ell
\]
is the composition series of \cite[Theorem~2]{LavickaSmid2015}; the restriction of
$\mathsf B_\ell$ to $\mathcal H_\ell$ has radical
$\mathcal H_\ell^0$, and the induced pairing between
$\widetilde{\mathcal H}_\ell/\mathcal H_\ell$ and
$\mathcal H_\ell^0$ is nondegenerate.

More explicitly, put
\[
 \mathsf F_h(H,H')=(\Delta^h(HH'))(0),\qquad H,H'\in\mathcal H_h.
\]
For every $H\in\mathcal H_h$ one may choose a lift
$Q_H\in\widetilde{\mathcal H}_\ell$ satisfying
\begin{equation}\label{eq:exceptional-lift-equation}
 \Delta Q_H=r^{a-1}H,
\end{equation}
equivalently $r\Delta Q_H=r^aH$.  Its class modulo $\mathcal H_\ell$ depends
only on $H$, and
\begin{equation}\label{eq:exceptional-head-socle-pairing}
 \mathsf B_\ell(Q_H,r^aH')
 =\rho_{u,h}\,\mathsf F_h(H,H'),
 \qquad
 \rho_{u,h}=(-1)^u
 \frac{(u-h)!}{2a\,4^h h!\,\pi^u}\neq0.
\end{equation}
\end{lemma}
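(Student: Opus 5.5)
Our plan is to compute the Pizzetti pairing directly on degree $2\ell$ using \eqref{eq:pizzetti-homogeneous-coefficient} at $M=-2u$, where the coefficient is $(-1)^\ell 2\pi^{-u}/(4^\ell\ell!\,\Gamma(\ell-u))$. This coefficient is finite and nonzero because $\ell\ge u+2$. We would proceed in four steps.

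\textbf{Step 1: the lift $Q_H$.} We first construct $Q_H$. We take the composition series and the existence of lifts from \cite[Theorem~2]{LavickaSmid2015}: the map $r\Delta$ sends $\widetilde{\mathcal H}_\ell$ onto $\mathcal H_\ell^0=r^a\mathcal H_h$ with kernel $\mathcal H_\ell$. This holds because $\Delta(r\Delta Q)=0$ and because $r^a\mathcal H_h$ is harmonic exactly when $\ell\in I_M$. Here $[\Delta,r^a]H=2a(2a+2h-2+M)r^{a-1}H$ and $2a+2h-2-2u=0$. Since multiplication by $r$ is injective (the bosonic part is nonzero), $r\Delta Q_H=r^aH$ forces $\Delta Q_H=r^{a-1}H$. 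Two lifts then differ by an element of $\ker\Delta=\mathcal H_\ell$, so the class of $Q_H$ depends only on $H$.

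\textbf{Step 2: the head--socle block.} To compute $\mathsf B_\ell(Q_H,r^aH')$ we apply \Cref{lem:polarized-laplacian} with $A=r^aQ_H$. We therefore have to compute $\Delta^{a}(r^aQ_H)$ and check that it lies in a multiple of $\mathcal H_h$, or else rearrange the calculation. A cleaner route is to use the sphere relation \eqref{eq:sphere-relation}, which holds at $M=-2u$: it gives $\mathsf B_\ell(Q_H,r^aH')=(-1)^a\mathcal I(Q_HH')$, with $Q_H\in\mathcal P_\ell$ and $H'\in\mathcal H_h$, and $\ell-h=2a-2$. So we apply \Cref{lem:polarized-laplacian} with $A=Q_H$, using $a-1$ Laplacians. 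We need $\Delta^{a-1}Q_H=\Delta^{a-2}(r^{a-1}H)$, which is a multiple of $r H$ rather than harmonic. So instead we iterate the commutator $[\Delta,r^{j}]$ to get $\Delta^{a-1}r^{a-1}H=c\,H$, and apply the lemma to $\Delta^{a}$ acting appropriately. Concretely, $\Delta^{a}Q_H=\Delta^{a-1}r^{a-1}H=\prod_{j=1}^{a-1}2j(2j+2h-2-2u)\,H$. This product is nonzero because $j<a=u-h+1$. The lemma with exponent $a$ and $A=Q_H$ then needs the degree of $Q_H$ to equal $2a+h$. Since $\ell=2a+h-2$, we adjust by replacing $Q_H$ with $rQ_H$ via the sphere relation. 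The precise bookkeeping is the main obstacle. We expect the Pochhammer products, combined with the Pizzetti coefficient $1/\Gamma(\ell-u)=1/(a)!$ and the factor $(a+h)!/(a!h!)$, to collapse to $\rho_{u,h}$, with the isolated $1/(2a)$ coming from the vanishing factor at $j=a$ that is missing from the product.

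\textbf{Step 3: the radical on $\mathcal H_\ell$.} Next we show that the restriction of $\mathsf B_\ell$ to $\mathcal H_\ell$ has radical $\mathcal H_\ell^0$. For $H''\in\mathcal H_\ell$ we compute $\mathcal I(r^aH\,H'')=(-1)^a\mathcal I(HH'')$, which vanishes by harmonic orthogonality of the Pizzetti pairing in distinct degrees $h\neq\ell$. Harmonic orthogonality follows from the Pizzetti formula via the Fischer-type identity $\Delta^{N}(HH'')(0)=0$, which in turn follows from polarization as in \Cref{lem:polarized-laplacian}. For the complement we take $\mathcal H_\ell=\mathcal H_\ell^0\oplus W$ with $W$ a Fischer-orthogonal complement. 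On $W$, \Cref{lem:polarized-laplacian} with $a=0$ gives $\mathsf B_\ell$ as a nonzero multiple of $\mathsf F_\ell$, which is nondegenerate on $\mathcal H_\ell/\mathcal H_\ell^0$. The Fischer form is nondegenerate because the Fischer inner product is.

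\textbf{Step 4: nondegeneracy on $\widetilde{\mathcal H}_\ell$.} Finally we conclude with a block-triangular argument. The Gram matrix has an antidiagonal head--socle block, which is nondegenerate by Step 2 because $\mathsf F_h$ is nondegenerate and $\rho_{u,h}\neq0$. It also has a nondegenerate middle block on $\mathcal H_\ell/\mathcal H_\ell^0$, the socle is orthogonal to $\mathcal H_\ell$, and the determinant is therefore nonzero.
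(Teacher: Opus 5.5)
You have the paper's architecture: the lift via $r\Delta$, the sphere relation plus \Cref{lem:polarized-laplacian} for the head--socle block, harmonic-degree orthogonality for the socle, and a block-triangular Gram matrix. However, the two computations that carry the content are not done, and one of them is argued incorrectly.

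In Step~2 your obstacle comes from an arithmetic slip: $\ell-h=2\ell-2u-2=2a$, not $2a-2$. So $Q_H\in\mathcal P_{2a+h}$ already, and \Cref{lem:polarized-laplacian} applies directly with $A=Q_H$, because $\Delta^aQ_H=\Delta^{a-1}(r^{a-1}H)=(-4)^{a-1}((a-1)!)^2H\in\mathcal H_h$. Your replacement $rQ_H$ has degree $2(a+1)+h$, so applying the lemma there with exponent $a$ would be wrong. Also, after the sphere relation $Q_HH'$ has degree $2(u+1)$, so the relevant coefficient is the $k=u+1$ Pizzetti coefficient (with $\Gamma(1)=1$), not one involving $1/\Gamma(\ell-u)$. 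The bookkeeping is then short:
\begin{itemize}
\item the sign is $(-1)^{a}(-1)^{u+1}(-1)^{a-1}=(-1)^u$;
\item $(u+1)!$ cancels against $(a+h)!$;
\item $2\cdot4^{a-1}/4^{u+1}=1/(2\cdot4^h)$, because $a-u-2=-h-1$;
\item $((a-1)!)^2/(a!\,h!)=(a-1)!/(a\,h!)$.
\end{itemize}
This is where $1/(2a)$ comes from, and with $a-1=u-h$ it gives $\rho_{u,h}$.

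The genuine gap is Step~3. Nondegeneracy of the Fischer pairing on $\mathcal P_\ell$ does not pass to the subquotient $\mathcal H_\ell/\mathcal H_\ell^0$. Indeed, $\mathsf F_\ell$ restricted to $\mathcal H_\ell$ is degenerate, with $\mathcal H_\ell^0$ in its radical. So the bilinear $\mathsf F_\ell$-orthogonal of $\mathcal H_\ell^0$ inside $\mathcal H_\ell$ is all of $\mathcal H_\ell$. Orthogonality for a Hermitian Fischer product tells you nothing about the bilinear form on $W$.

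What adjointness of $r$ and $\Delta$ for the Fischer pairing actually gives (with surjectivity of $\Delta$ and injectivity of $r$) is $\operatorname{Rad}(\mathsf F_\ell|_{\mathcal H_\ell})=\mathcal H_\ell\cap r\mathcal P_{\ell-2}$. You would still have to prove $\mathcal H_\ell\cap r\mathcal P_{\ell-2}=r^a\mathcal H_h$, and that requires input from the generalized Fischer structure.

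The paper closes this step differently. The radical of $\mathsf B_\ell|_{\mathcal H_\ell}$ is an $\mathfrak{osp}(m|2n)$-invariant subspace containing $\mathcal H_\ell^0$. The form is not identically zero on $\mathcal H_\ell$, by the nonzero degree-$2\ell$ normalization and the harmonic Fischer--Pizzetti comparison. And $\mathcal H_\ell/\mathcal H_\ell^0$ is irreducible by \cite[Theorem~2]{LavickaSmid2015}. Hence the radical is exactly $\mathcal H_\ell^0$.

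Without such an argument, the middle block of your Gram matrix is not known to be invertible, so Step~4 does not close. Your justification does work for $\mathsf F_h$ on $\mathcal H_h$, which the head--socle block needs, precisely because the ordinary Fischer decomposition is direct in degree $h\le u$.
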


\begin{proof}
The Pizzetti form is an even supersymmetric
$\mathfrak{osp}(m|2n)$-invariant pairing.  Its restriction to
$\mathcal H_\ell$ therefore has an invariant radical.  The maximal proper
submodule is $\mathcal H_\ell^0$ by
\cite[Theorem~2]{LavickaSmid2015}.  This submodule is contained in the
radical.  Indeed, write $b_H=r^aH$ with $H\in\mathcal H_h$.  For
$Q\in\mathcal H_\ell$, the sphere relation gives
$\mathsf B_\ell(b_H,Q)=(-1)^a\mathcal I_{-2u}^{\SSph}(HQ)$, which is zero
by harmonic-degree orthogonality because $h\ne\ell$; see
\cite[Section~4]{DeBieSommen2007Spherical}.  If both arguments
belong to $\mathcal H_\ell^0$, the same reduction gives the harmonic
degree-$h$ normalization; it vanishes because $h\leq u$ and the reciprocal
gamma factor is zero.  On the other hand, the harmonic-degree-$\ell$
normalization is nonzero because $\Gamma(\ell-u)$ is finite.  The standard harmonic Fischer--Pizzetti
comparison therefore shows that the form induced on
$\mathcal H_\ell/\mathcal H_\ell^0$ is not identically zero.  Since this
quotient is irreducible and $\mathcal H_\ell^0$ is the maximal proper
submodule, the invariant radical is exactly $\mathcal H_\ell^0$; see
\cite[Theorem~2]{LavickaSmid2015} and
\cite[Sections~4--6]{Coulembier2013Orthosymplectic}.  Hence
\[
 \operatorname{Rad}(\mathsf B_\ell|_{\mathcal H_\ell})
 =\mathcal H_\ell^0.
\]

We now compute the missing head--socle block directly at the exceptional
superdimension.  Since $m>0$, the super Laplacian is surjective from
$\mathcal P_\ell$ onto $\mathcal P_{\ell-2}$; see the proof of
\cite[Lemma~5]{LavickaSmid2015}.  The sign convention
\eqref{eq:sign-conventions} changes neither this statement nor the kernels
of the generalized harmonic operators.  Hence, for
$H\in\mathcal H_h$, choose $Q_H\in\mathcal P_\ell$ such that
\[
 \Delta Q_H=r^{a-1}H.
\]
Put $b_H=r^aH$.  The radial transition formula
\cite[Lemma~2]{LavickaSmid2015} gives, at $M=-2u$,
\begin{equation}\label{eq:exceptional-collision-transition}
 \Delta b_H=2a(2h-2u+2a-2)r^{a-1}H=0.
\end{equation}
Moreover, iterating the same formula yields
\begin{equation}\label{eq:exceptional-lift-iterate}
 \Delta^aQ_H
 =\Delta^{a-1}(r^{a-1}H)
 =(-4)^{a-1}((a-1)!)^2H.
\end{equation}

It remains to evaluate the Pizzetti form.  Since $r=-1$ under the
supersphere functional and $a+h=u+1$, we have
\[
 \mathsf B_\ell(Q_H,b_{H'})=(-1)^a
 \mathcal I_{-2u}^{\SSph}(Q_HH').
\]
Applying \Cref{lem:polarized-laplacian} with $A=Q_H$ gives
\begin{equation}\label{eq:exceptional-product-laplacian}
 \left.\Delta^{u+1}(Q_HH')\right|_0
 =\frac{(u+1)!}{a!h!}
 \mathsf F_h(\Delta^aQ_H,H').
\end{equation}
Using the homogeneous Pizzetti coefficient with $k=u+1$, followed by
\eqref{eq:exceptional-lift-iterate}, gives
\begin{align*}
 \mathsf B_\ell(Q_H,b_{H'})
 &=(-1)^{a+u+1}
   \frac{2\pi^{-u}}{4^{u+1}(u+1)!}
   \frac{(u+1)!}{a!h!}
   (-4)^{a-1}((a-1)!)^2
   \mathsf F_h(H,H')\\
 &=(-1)^u\frac{(a-1)!}{2a\,4^hh!\,\pi^u}
   \mathsf F_h(H,H')
 =\rho_{u,h}\mathsf F_h(H,H'),
\end{align*}
since $a-1=u-h$.  This proves
\eqref{eq:exceptional-head-socle-pairing}.

Equation \eqref{eq:exceptional-lift-equation} implies
\[
 \Delta r\Delta Q_H=\Delta(r^aH)=0,
\]
so $Q_H\in\widetilde{\mathcal H}_\ell$, while
$r\Delta Q_H=r^aH$ shows that its quotient class maps to the socle under the
surjection
$r\Delta:\widetilde{\mathcal H}_\ell/\mathcal H_\ell
\to\mathcal H_\ell^0$ of \cite[Theorem~2]{LavickaSmid2015}.  If a different lift is chosen,
the difference lies in
$\mathcal H_\ell$.  The already proved identity
$\operatorname{Rad}(\mathsf B_\ell|_{\mathcal H_\ell})=
\mathcal H_\ell^0$, together with supersymmetry of the Pizzetti form, gives
\[
 \mathsf B_\ell(\mathcal H_\ell,\mathcal H_\ell^0)=0.
\]
Equivalently, this follows by harmonic-degree orthogonality after writing
$\mathcal H_\ell^0=r^a\mathcal H_h$.  Hence the cross-pairing depends only on
the quotient class of the lift and is well-defined.  The form $\mathsf F_h$ is nondegenerate and
$\rho_{u,h}\neq0$, so the head pairs nondegenerately with the socle.  If a
vector lies in the radical of $\mathsf B_\ell$ on
$\widetilde{\mathcal H}_\ell$, pairing with the socle first forces its
class in $\widetilde{\mathcal H}_\ell/\mathcal H_\ell$ to vanish.
The vector therefore lies in $\mathcal H_\ell$; nondegeneracy on
$\mathcal H_\ell/\mathcal H_\ell^0$ then forces it into the socle.
Finally, the nondegenerate head--socle pairing forces the socle component to be
zero.  Thus $\mathsf B_\ell$ is nondegenerate on the full generalized
harmonic module.
\end{proof}

We can now determine the radical of the ordinary Pizzetti form in every
homogeneous degree above the low-degree residue range.

\begin{theorem}\label{thm:exceptional-full-radical}
Let $M=-2u$ and $k\geq u+1$.  Put
\begin{equation}\label{eq:exceptional-radical-space}
 \mathfrak R_{u,k}:=
 \begin{cases}
  r^{k-u}\mathcal P_{2u-k},&u+1\leq k\leq2u,\\
  0,&k\geq2u+1.
 \end{cases}
\end{equation}
Then
\begin{equation}\label{eq:exceptional-radical-identity}
 \operatorname{Rad}\bigl(
   (P,Q)\mapsto\mathcal I_{-2u}^{\SSph}(PQ)
 \bigr)=\mathfrak R_{u,k}
 \qquad\text{on }\mathcal P_k.
\end{equation}
Consequently, the ordinary Pizzetti form descends nondegenerately to
\begin{equation}\label{eq:exceptional-full-quotient}
 \overline{\mathcal P}_{u,k}:=
 \mathcal P_k/\mathfrak R_{u,k}.
\end{equation}
In particular, it is already nondegenerate on the full homogeneous space for
every $k\geq2u+1$.
\end{theorem}

\begin{proof}
Put
\[
 N_k=\{k-2j:0\le j\le\lfloor k/2\rfloor\},\qquad
 I_M=\{u+2,\ldots,2u+2\},
\]
\[
 \widetilde J_k=N_k\cap I_M,\qquad
 J_k^0=\{2u+2-\ell:\ell\in\widetilde J_k\},\qquad
 J_k=N_k\setminus(\widetilde J_k\cup J_k^0).
\]
With this notation, \cite[Theorem~1 and Corollary~1]{LavickaSmid2015}
gives the generalized Fischer decomposition
\begin{equation}\label{eq:exceptional-generalized-fischer}
 \mathcal P_k=
 \bigoplus_{\ell\in\widetilde J_k}
 r^{(k-\ell)/2}\widetilde{\mathcal H}_\ell
 \ \oplus\!
 \bigoplus_{\ell\in J_k}
 r^{(k-\ell)/2}\mathcal H_\ell.
\end{equation}
Consider the even operator
\begin{equation}\label{eq:casimir-definition}
 \Omega=E(E+M-2)-r\Delta.
\end{equation}
In the present conventions, $\Omega$ is the spherical
Laplace--Beltrami operator, equivalently the image of the quadratic
$\mathfrak{osp}(m|2n)$-Casimir up to the conventional choice of sign; see
\cite[Section~5]{Coulembier2013Orthosymplectic} and
\cite[Section~3]{LavickaSmid2015}.  The Pizzetti form is
$\mathfrak{osp}(m|2n)$-invariant.  The Casimir actions in the two variables
are consequently contragredient, and its evenness gives
\[
 \mathsf B_k(\Omega P,Q)=\mathsf B_k(P,\Omega Q).
\]
Thus $\Omega$ is self-adjoint for this pairing.  On harmonic degree $j$ its
eigenvalue is
\begin{equation}\label{eq:exceptional-casimir-eigenvalue}
 \chi_j=j(j+M-2).
\end{equation}
At $M=-2u$ one has
\[
 \chi_j-\chi_\ell=(j-\ell)(j+\ell-2u-2).
\]
Thus two harmonic rows can have the same Casimir eigenvalue only when
$j=\ell$ or $j=2u+2-\ell$.  If two generalized eigenvalues $\alpha\ne\beta$ are distinct, choose
polynomials $A,B$ with
$A(z)(z-\alpha)^N+B(z)(z-\beta)^N=1$.  Moving these polynomials in $\Omega$
across the pairing shows that the corresponding generalized eigenspaces are
orthogonal.  This is the B\'ezout argument for a self-adjoint operator.  Each
nontrivial mirror pair is precisely the pair already combined inside one
generalized module
$\widetilde{\mathcal H}_\ell$.

Multiplication by $r$ acts as $-1$ under the sphere functional.  Hence a
regular row $r^{(k-\ell)/2}\mathcal H_\ell$ has the same degeneracy as its
harmonic form.  It is zero for $\ell\leq u$ and nondegenerate for
$\ell\geq u+1$.  By \Cref{lem:confluent-exceptional-pairing}, every
generalized row in the first sum of
\eqref{eq:exceptional-generalized-fischer} is nondegenerate.

A low harmonic degree $\ell\leq u$ remains as a separate regular row exactly
when its mirror $2u+2-\ell$ has not yet entered the degree-$k$ column, namely
when
\[
 \ell<2u+2-k.
\]
Because $k-\ell$ is even, this is equivalent to $\ell\leq2u-k$.  Thus the
radical is the orthogonal sum
\[
 \bigoplus_{\substack{0\leq\ell\leq2u-k\\k-\ell\text{ even}}}
 r^{(k-\ell)/2}\mathcal H_\ell.
\]
For $u+1\leq k\leq2u$, the degree $2u-k$ lies below the exceptional range,
so its ordinary Fischer decomposition gives
\[
 \bigoplus_{\substack{0\leq\ell\leq2u-k\\k-\ell\text{ even}}}
 r^{(k-\ell)/2}\mathcal H_\ell
 =r^{k-u}\mathcal P_{2u-k}.
\]
For $k\geq2u+1$ the index set is empty.  This proves
\eqref{eq:exceptional-radical-identity} and the quotient statement.
\end{proof}

In the collision range, the reflected low-degree residue form induces a
canonical nondegenerate form on the exact radical.

\begin{definition}\label{def:transported-radical-pairing}
For $u+1\le k\le2u$, put
\begin{equation}\label{eq:graded-confluent-dh}
 d=k-u,\qquad h=2u-k,\qquad
 \mathfrak R_{u,k}=r^d\mathcal P_h.
\end{equation}
Write
\[
 \mathsf B_{u,k}(P,Q):=\mathcal I_{-2u}^{\SSph}(PQ),
\]
and let $\overline{\mathsf B}_{u,k}$ be its descended form on
$\mathcal P_k/\mathfrak R_{u,k}$.  Because $m>0$, write $r=r_{\mathrm b}+r_{\mathrm f}$ with
$r_{\mathrm b}$ a nonzero bosonic quadratic polynomial and
$r_{\mathrm f}$ of fermionic degree two.  If $A\ne0$, choose the smallest
fermionic degree $p$ occurring in its exterior-algebra expansion.  The
fermionic-degree-$p$ component of $r^dA$ is
$r_{\mathrm b}^{d}A_p$, which cannot vanish in the bosonic polynomial ring.
Thus multiplication by $r^d$ is injective on $\mathcal P$.  The low-degree residue pairing therefore transports
unambiguously to the radical by
\begin{equation}\label{eq:transported-radical-pairing}
 \mathsf C_{u,k}(r^dA,r^dB):=\mathcal R_{u,h}(AB),
 \qquad A,B\in\mathcal P_h.
\end{equation}
\end{definition}

The following row-action formula makes the direct verification explicit.
For a scalar invariant kernel $K$, let
\[
 T_KP(x):=\mathcal I_{M,y}^{\SSph}\bigl(K(x,y)P(y)\bigr).
\]

\begin{lemma}[Homogeneous monomial contraction]
\label{lem:homogeneous-monomial-contraction}
Let $M$ be an integral superdimension, let $Q\in\mathcal P_\ell$, and let
$b\ge0$ have the same parity as $\ell$.  Put
\[
 N=\frac{b+\ell}{2},\qquad q_0=\frac{b-\ell}{2}.
\]
Then
\begin{equation}\label{eq:homogeneous-monomial-contraction}
 \mathcal I_{M,y}^{\SSph}\bigl(s^bQ(y)\bigr)
 =(-1)^N\frac{2\pi^{M/2}b!}{\Gamma(N+M/2)}
 \sum_{\nu=0}^{\lfloor\ell/2\rfloor}
 \frac{r^{q_0+\nu}(\Delta^\nu Q)(x)}
 {4^\nu\nu!(q_0+\nu)!}.
\end{equation}
Here a term with $q_0+\nu<0$ is interpreted as zero.  The reciprocal gamma
factor is understood by continuation at exceptional superdimensions.
\end{lemma}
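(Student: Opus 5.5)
The plan is to compute $\mathcal I_{M,y}^{\SSph}(s^bQ(y))$ by applying the homogeneous Pizzetti coefficient \eqref{eq:pizzetti-homogeneous-coefficient} in the variable $y$. Here $s^bQ(y)$ has $y$-degree $b+\ell=2N$, so
\[
 \mathcal I_{M,y}^{\SSph}(s^bQ)=(-1)^N\frac{2\pi^{M/2}}{4^NN!\,\Gamma(N+M/2)}\bigl(\Delta_y^N(s^bQ)\bigr)\big|_{y=0}.
\]
Everything therefore reduces to a purely algebraic evaluation of $\Delta_y^N(s^bQ)|_{y=0}$ as a polynomial in $x$. This evaluation does not involve $M$, so the formula holds verbatim at exceptional $M$, with $1/\Gamma$ read as an entire function.

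The key step is a closed action of $\Delta_y$ on the products $s^cF(y)$. The vector $s=\{x,y\}$ is linear in $y$, and $\partial_y s$ is (twice) the supervector $x$. The super Leibniz rule therefore gives
\[
 \Delta_y(s^cF)=4c(c-1)\,r\,s^{c-2}F+4c\,s^{c-1}(\langle x,\partial_y\rangle F)+s^c\Delta_yF,
\]
up to the constants fixed by \eqref{eq:sign-conventions}. I would check these constants on the case $Q=1$ against the $k$-degree kernel of \Cref{thm:generic-homogeneous-kernel}.

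After $N$ applications followed by evaluation at $y=0$, the only surviving terms are those in which every factor of $s$ and every $y$ in $Q$ has been removed. If $\nu$ Laplacians act on $Q$, then the remaining $\ell-2\nu$ degrees of $Q$ are killed by directional derivatives $\langle x,\partial_y\rangle$. Since $(\langle x,\partial_y\rangle^{\ell-2\nu}G)(0)=(\ell-2\nu)!\,G(x)$ for $G=\Delta^\nu Q$ homogeneous, these produce $(\Delta^\nu Q)(x)$. The other $q_0+\nu$ steps each remove $s^2$ and produce a factor of $r$.

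Counting the orderings is a multinomial/Vandermonde computation. The number of interleavings of the $N$ Laplacians among the three types contributes $N!/(\nu!\,(q_0+\nu)!\,(\ell-2\nu)!)$. The falling factorials from the $s$-powers multiply to $b!$, and the powers of $4$ combine to $4^N/4^\nu$. Cancelling $4^NN!$ against the Pizzetti prefactor should leave exactly \eqref{eq:homogeneous-monomial-contraction}. When $q_0+\nu<0$ there are not enough factors of $s$, and the term vanishes automatically.

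To organize this cleanly, I would use the polarization device of \Cref{lem:polarized-laplacian}. I would write $\Delta_y$ acting on a product as $\Delta_{(1)}+\Delta_{(2)}+2\mathcal C$, with the first slot carrying $s^b$ and the second carrying $Q$, and expand the power trinomially. The main obstacle is the bookkeeping of the cross term $\mathcal C$ and of the super signs, since $s$ is even but the derivatives $\partial_y$ contain odd components. I expect evenness of $s$ and $r$ and the supersymmetry conventions to make all signs collapse, but this is where I would verify the computation carefully, starting with the test cases $\ell=0$ and $\ell=1$.
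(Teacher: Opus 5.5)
Your proposal is correct and is essentially the paper's argument: your Leibniz rule $\Delta_y(s^cF)=4c(c-1)\,r\,s^{c-2}F+4c\,s^{c-1}D_{x,y}F+s^c\Delta_yF$ (your $\langle x,\partial_y\rangle$ is the paper's $D_{x,y}$, and the constants are right as written) is the paper's conjugation identity $e^{-\lambda s}\Delta_y e^{\lambda s}=\Delta_y+4\lambda D_{x,y}+4\lambda^2r$ read coefficientwise in $\lambda$. Your interleaving count, with the telescoping $b!$ and the factor $4^{N-\nu}$, is exactly what the paper obtains by using that these three operators commute, expanding $e^{z\Delta_y}(e^{\lambda s}Q)|_{y=0}=e^{4z\lambda^2r}(e^{z\Delta}Q)(4z\lambda x)$, and reading off the coefficient of $z^N\lambda^b$; the generating function simply automates the bookkeeping you describe.
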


\begin{proof}
The calculation is a formal consequence of the product rule and therefore is
valid in the polynomial superspace.  For formal parameters $z$ and $\lambda$,
one has
\begin{equation}\label{eq:formal-heat-contraction}
 \left.e^{z\Delta_y}\bigl(e^{\lambda s}Q(y)\bigr)\right|_{y=0}
 =e^{4z\lambda^2r}\bigl(e^{z\Delta}Q\bigr)(4z\lambda x).
\end{equation}
Indeed, let $D_{x,y}$ be the scalar directional derivative characterized by
\[
 e^{aD_{x,y}}Q(y)=Q(y+ax).
\]
Then
\[
 e^{-\lambda s}\Delta_y e^{\lambda s}
 =\Delta_y+4\lambda D_{x,y}+4\lambda^2r.
\]
The three operators on the right commute, and exponentiation followed by
setting $y=0$ gives \eqref{eq:formal-heat-contraction}.  Since $Q$ is
homogeneous, the coefficient of $z^N\lambda^b$ on the right-hand side of
\eqref{eq:formal-heat-contraction} is
\[
 \sum_{\nu=0}^{\lfloor\ell/2\rfloor}
 \frac{4^{N-\nu}r^{q_0+\nu}(\Delta^\nu Q)(x)}
 {\nu!(q_0+\nu)!}.
\]
Comparing with the coefficient on the left gives
\[
 \left.\Delta_y^N\bigl(s^bQ(y)\bigr)\right|_{y=0}
 =N!b!\sum_{\nu=0}^{\lfloor\ell/2\rfloor}
 \frac{4^{N-\nu}r^{q_0+\nu}(\Delta^\nu Q)(x)}
 {\nu!(q_0+\nu)!}.
\]
Multiplication by the homogeneous Pizzetti coefficient
\eqref{eq:pizzetti-homogeneous-coefficient} proves
\eqref{eq:homogeneous-monomial-contraction}.
\end{proof}

\begin{lemma}\label{lem:regular-row-action}
Let $M$ be an integral superdimension, let $0\le p\le\lfloor k/2\rfloor$,
and put $\ell=k-2p$.  If
\[
 K(x,y)=\sum_{j=0}^{\lfloor k/2\rfloor}c_j(rt)^js^{k-2j},
 \qquad H\in\mathcal H_\ell,
\]
then
\begin{equation}\label{eq:regular-row-action}
 T_K(t^pH)(x)=\Lambda_{M,k,p}(c)\,r^pH(x),
\end{equation}
where
\begin{align}\label{eq:regular-row-scalar}
 \Lambda_{M,k,p}(c)
 &=\sum_{j=0}^{p}c_j\Phi_{M,k}(p,j),\notag\\
 \Phi_{M,k}(p,j)
 &=(-1)^k\frac{2\pi^{M/2}(k-2j)!}
 {(p-j)!\,\Gamma(M/2+k-p-j)}.
\end{align}
At a negative even superdimension, the reciprocal gamma factor is interpreted
by analytic continuation; in particular, it vanishes at the nonpositive
integers.
\end{lemma}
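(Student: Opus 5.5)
The plan is to reduce everything to \Cref{lem:homogeneous-monomial-contraction}, using the sphere relation to strip off the powers of $t$. By linearity in the coefficients $c_j$, it suffices to treat a single term $c_j(rt)^js^{k-2j}$ of $K$. The factor $r^j$ depends only on $x$, so it comes out of the $y$-integral. This gives
\[
 T_{(rt)^js^{k-2j}}(t^pH)(x)=r^j\,\mathcal I_{M,y}^{\SSph}\bigl(t^{j+p}s^{k-2j}H(y)\bigr).
\]
Next I apply the sphere relation \eqref{eq:sphere-relation} in the variable $y$, where $t=y^2$ plays the role of $r$. It is valid coefficientwise in the polynomial dependence on $x$, and it holds at exceptional superdimensions as well, as shown after \eqref{eq:sphere-relation-direct}. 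Applying it $j+p$ times yields the factor $(-1)^{j+p}$ times $\mathcal I_{M,y}^{\SSph}(s^{k-2j}H(y))$.

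For this remaining integral I invoke \Cref{lem:homogeneous-monomial-contraction} with $b=k-2j$ and $Q=H\in\mathcal H_\ell$, where $\ell=k-2p$. The parity hypothesis holds since $b-\ell=2(p-j)$. Then
\[
 N=k-p-j,\qquad q_0=p-j.
\]
Because $H$ is harmonic, only the $\nu=0$ term survives in \eqref{eq:homogeneous-monomial-contraction}. That term equals
\[
 (-1)^{k-p-j}\,\frac{2\pi^{M/2}(k-2j)!}{\Gamma(M/2+k-p-j)\,(p-j)!}\,r^{p-j}H(x).
\]
When $j>p$ we have $q_0<0$, so the term is zero by the convention in that lemma. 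This is why the sum in $\Lambda_{M,k,p}$ stops at $j=p$.

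Multiplying by $r^j$ and by $(-1)^{j+p}$ produces $r^pH(x)$ with overall sign $(-1)^{k}$. The scalar is then exactly $\Phi_{M,k}(p,j)$ from \eqref{eq:regular-row-scalar}. Summing over $j$ with weights $c_j$ gives \eqref{eq:regular-row-action}.

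The only delicate point is the exceptional superdimension. All Pizzetti coefficients involve $1/\Gamma$, which is entire, so every identity used is an identity of entire functions of the scalar normalization. The differential-operator part, $\Delta_y^N$ evaluated at $y=0$, is computed on the fixed superspace. Hence no nearby realization is needed, and the reciprocal gamma factors simply vanish at nonpositive integers. I expect this sign and index bookkeeping, together with justifying the sphere relation at $M\in-2\mathbb N_0$, to be the main care point. Both are already settled by the earlier direct argument \eqref{eq:sphere-relation-direct}.
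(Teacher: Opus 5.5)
Your proposal is correct and follows the paper's proof essentially step for step. You split $K$ by linearity, remove $t^{p+j}$ with the sphere relation (valid at $M=-2u$ by \eqref{eq:sphere-relation-direct}), apply \Cref{lem:homogeneous-monomial-contraction} with $b=k-2j$ and $Q=H$, keep only the $\nu=0$ term (which vanishes for $j>p$), and combine $(-1)^{k-p-j}(-1)^{p+j}=(-1)^k$ with $r^{p-j}r^j=r^p$; your explicit check that the sphere relation survives at exceptional superdimension is exactly what the paper's proof uses implicitly.
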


\begin{proof}
Apply \Cref{lem:homogeneous-monomial-contraction} with
$Q=H$, $\ell=k-2p$, and $b=k-2j$.  Since $H$ is harmonic, only the term
$\nu=0$ remains.  It vanishes for $j>p$, while for $j\le p$ it equals
\[
 (-1)^{k-p-j}\frac{2\pi^{M/2}(k-2j)!}
 {(p-j)!\,\Gamma(M/2+k-p-j)}r^{p-j}H(x).
\]
Inside the Pizzetti functional the factor $t^{p+j}$ contributes
$(-1)^{p+j}$, and the exterior factor $r^j$ restores radial degree $p$.
The total sign is $(-1)^k$, which gives
\eqref{eq:regular-row-scalar}.  This is the monomial case of the
superspace Funk--Hecke formula, with the exceptional zeros supplied by the
continued reciprocal gamma factor.
\end{proof}

The quotient and high-degree inverse kernels can now be verified directly
at the fixed exceptional superdimension.

\begin{lemma}\label{lem:direct-exceptional-inverses}
Let the fixed superspace have $M=-2u$ and $k\ge u+1$.

If $u+1\le k\le2u$, set $d=k-u$ and $h=2u-k$, and define
\begin{equation}\label{eq:direct-quotient-kernel}
 K_{u,k}^{\mathrm{quo}}(x,y)
 =\gamma_{-2u,k}
 \sum_{j=0}^{d-1}
 (-1)^j\frac{k!}{j!(k-2j)!(d-j)_j}(rt)^js^{k-2j}.
\end{equation}
Then, for every $P\in\mathcal P_k$,
\begin{equation}\label{eq:direct-quotient-reproduction}
 \pi_x\,\mathcal I_{-2u,y}^{\SSph}
 \bigl(K_{u,k}^{\mathrm{quo}}(x,y)P(y)\bigr)=\pi_xP(x),
\end{equation}
where $\pi_x:\mathcal P_k\to\mathcal P_k/r^d\mathcal P_h$ is the quotient map.

If $k\ge2u+1$, the coefficient
$K_{u,k}^{\mathrm{high}}=[\rho^k]\mathscr L_u$ satisfies
\begin{equation}\label{eq:direct-high-reproduction}
 \mathcal I_{-2u,y}^{\SSph}
 \bigl(K_{u,k}^{\mathrm{high}}(x,y)P(y)\bigr)=P(x).
\end{equation}
\end{lemma}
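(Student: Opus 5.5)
The plan is to verify both reproduction statements row by row, using the generalized Fischer decomposition \eqref{eq:exceptional-generalized-fischer} and the row-action formula of \Cref{lem:regular-row-action}. Both kernels have the form $K=\sum_j c_j(rt)^js^{k-2j}$, so on a regular row $t^pH$ with $H\in\mathcal H_{k-2p}$ we get $T_K(t^pH)=\Lambda_{-2u,k,p}(c)\,r^pH$. For each relevant row it therefore suffices to show $\Lambda=1$. For generalized rows, the corresponding identity has to be established on $\widetilde{\mathcal H}_\ell$, modulo the socle where appropriate.

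\textbf{Quotient range $u+1\le k\le 2u$.} Rows with $p\ge d$, i.e. $\ell=k-2p\le h$, lie inside $r^d\mathcal P_h$ and vanish in the quotient, so we only need rows with $p\le d-1$. For such rows only $j\le p\le d-1$ enters $\Lambda$, so truncating the sum at $j\le d-1$ does not affect them. Next I would compare $c_j$ with the generic coefficients. The generic coefficient is $\gamma_{\mu,k}(-1)^jk!(\mu/2)_{k-j}/(j!(k-2j)!(\mu/2)_k)$. At $\mu=-2u$ we have $(\mu/2)_{k-j}/(\mu/2)_k=1/(\mu/2+k-j)_j$, and $-u+k-j=d-j\ge1$ for $j\le d-1$, so this ratio tends to $1/(d-j)_j$. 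Hence the $c_j$ are exactly the specialized generic coefficients.

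The generic identity $\Lambda_{\mu,k,p}(c(\mu))=1$ from \Cref{thm:generic-homogeneous-kernel} is a finite sum of functions that are regular at $\mu=-2u$. The gamma factor in $\Phi$ is $1/\Gamma(\mu/2+k-p-j)$, which is entire, and $\gamma_{\mu,k}$ is finite because $k>u$. This rests on \Cref{lem:regular-row-action} being an algebraic identity in $M$, so evaluating at $\mu=-2u$ gives $\Lambda=1$ on regular rows with $p<d$.

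For generalized rows $r^{(k-\ell)/2}\widetilde{\mathcal H}_\ell$ with $\ell\ge u+2$, the head is spanned modulo $\mathcal H_\ell$ by the lifts $Q_H$ of \Cref{lem:confluent-exceptional-pairing}. I would apply \Cref{lem:homogeneous-monomial-contraction} with $Q=t^{p}Q_H$, for which the terms $\nu\le a$ survive through $\Delta^\nu Q_H$. The aim is to show that the output equals $r^pQ_H$ modulo $r^d\mathcal P_h$. The extra terms are multiples of $r^{\ge p+1}\Delta^\nu Q_H$ with $\Delta^\nu Q_H\in r^{a-\nu}\mathcal H_h$, so they should land in $r^d\mathcal P_h$ by a degree count, since $p+a=(k-\ell)/2+\ell-u-1\ge d$. \textbf{I expect this generalized-row bookkeeping to be the main obstacle}, and would check the power of $r$ carefully.

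\textbf{High range $k\ge2u+1$.} First extract $[\rho^k]\mathscr L_u$. Because $D^u$ has degree $2u<k$ in $\rho$, only the $-\log D$ term contributes, so $K^{\mathrm{high}}$ is $-\frac{(-1)^u\pi^u}{2u!}[\rho^k]D^u\log D$. By \Cref{thm:global-log-kernel}, it also equals the coefficientwise limit of $\mathsf G_{\mu,k}$, since the residue has no $\rho^k$ term. All $\Gamma(\mu/2+k-j)$ in \eqref{eq:global-coefficient-formula} with $k-j\le u$ have poles, but these cancel in $\mathscr R_u$. The finite parts then enter $\Lambda$ only through products with $1/\Gamma(\mu/2+k-p-j)$.

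So I would argue as follows. On regular rows, $\Lambda_{\mu,k,p}(c(\mu))=1$ identically in $\mu$, and the pole part contributes $\Lambda_{\mu,k,p}(\text{residue coefficients})=0$, because $[\rho^k]\mathscr R_u=0$. Taking the constant term at $\mu=-2u$ then yields $\Lambda_{-2u,k,p}(c^{\mathrm{high}})$ plus a derivative correction. This correction comes from $\partial_\mu$ of the reciprocal gamma factors multiplied against residues, and it must be shown to vanish. Here I would use that the residue coefficients with $j$ in the relevant range vanish identically, because $[\rho^k]D^u=0$ coefficientwise in $(rt)^js^{k-2j}$.

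Generalized rows are handled by the same continuation argument applied to the lifts $Q_H$, combined with the nondegeneracy of \Cref{thm:exceptional-full-radical}. That nondegeneracy means it suffices to check the reproduction against $\mathsf B_k$, which I would do on each generalized block using the head–socle formula \eqref{eq:exceptional-head-socle-pairing}.
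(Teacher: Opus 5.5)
Your regular-row argument is sound. Continuing the scalar identity $\Lambda_{\mu,k,p}(c(\mu))=1$ is legitimate, because $c_j(\mu)\Phi_{\mu,k}(p,j)=(-1)^j\binom pj\binom{\mu/2+k-j-1}{p}$ is a polynomial in $\mu$; this replaces the paper's direct evaluation \eqref{eq:exceptional-regular-binomial}. In the high range no coefficient has a pole at all, since $k-j\ge\lceil k/2\rceil\ge u+1$, so the ``derivative correction'' you anticipate never arises.

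The genuine gap is in the generalized rows, where your degree count is false. With $k=\ell+2p$, $a=\ell-u-1$ and $d=k-u$, one gets $p+a=d-p-1<d$; already for $u=2$, $k=\ell=4$ this is $1<2$. So every $\nu\ge1$ term produced by \Cref{lem:homogeneous-monomial-contraction} is a multiple of $r^{p+a}H=r^pb_H$ with $H\in\mathcal H_{2u+2-\ell}$. That term lies in the socle $r^p\mathcal H_\ell^0$ of the generalized row, and by directness of \eqref{eq:exceptional-generalized-fischer} this socle meets $r^d\mathcal P_h$ only in $0$. Reproduction modulo the radical therefore requires an exact cancellation: the total coefficient $\beta_p$ of $r^pb_H$ must vanish. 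This is the substantive step of the lemma. The paper proves it by inserting \eqref{eq:all-lift-iterates} and recognizing each inner $j$-sum as a $(p+\nu)$-th finite difference of a polynomial of degree $p$.

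Your high-range plan does not supply this cancellation either. The lifts $Q_H$ exist only on the fixed exceptional superspace: for $M\neq-2u$ the transition coefficient $2a(2u+M)$ in $\Delta(r^aH)$ is nonzero, so there is no generalized module and nothing to continue. Moreover, \eqref{eq:exceptional-head-socle-pairing} only describes how heads pair with socles. It does not compute the socle component of $T^*(r^pQ_H)$, which is the quantity that must vanish.

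A short repair bypasses the row decomposition entirely. Strip $t^j$ using the sphere relation, and apply \Cref{lem:homogeneous-monomial-contraction} with $Q=P\in\mathcal P_k$, $b=k-2j$, $q_0=-j$. With the coefficients $c_j^*$ of \eqref{eq:exceptional-unified-coefficients}, the factors $\Gamma(d-j)$ cancel for $j<J$, and on all of $\mathcal P_k$ one obtains
\[
 T^*=\sum_{\nu}\Psi_\nu\,r^\nu\Delta^\nu,
 \qquad
 \Psi_\nu=\frac{1}{4^\nu\nu!}\sum_{j\le\nu,\ j<J}\frac{(-1)^j}{j!\,(\nu-j)!}.
\]
Hence $\Psi_\nu=\delta_{\nu0}$ for $\nu<J$.
\begin{itemize}
\item In the collision range, where $J=d$, the remaining terms map $\mathcal P_k$ into $r^d\mathcal P_h$.
\item In the high range, where $J=\lfloor k/2\rfloor+1$, the remaining terms vanish for degree reasons.
\end{itemize}
This proves both statements on all of $\mathcal P_k$ at once. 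In particular, since $\Delta^\nu(r^pQ_H)=0$ for $\nu>a+p=d-p-1$, it gives $T^*(r^pQ_H)=r^pQ_H$, that is, $\beta_p=0$.
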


\begin{proof}
Put $d=k-u$.  In the collision range set $J=d$, while in the high-degree
range set $J=\lfloor k/2\rfloor+1$.  In both cases the coefficients of the
kernel under consideration can be written uniformly as
\begin{equation}\label{eq:exceptional-unified-coefficients}
 c_j^*=\frac{(-1)^{k+j}\pi^u\Gamma(d-j)}
 {2j!(k-2j)!},\qquad 0\le j<J,
\end{equation}
with $c_j^*=0$ for $j\ge J$.  For the collision kernel this follows from
$(d-j)_j=\Gamma(d)/\Gamma(d-j)$ and the value of
$\gamma_{-2u,k}$; in the high-degree range it is the coefficientwise
specialization of \eqref{eq:global-coefficient-formula}.  Let $T^*$ be the
integral operator defined by these coefficients.

We first treat an ordinary harmonic row $t^p\mathcal H_{k-2p}$.  By
\Cref{lem:regular-row-action}, if $p<d$ then
\begin{align}\label{eq:exceptional-regular-binomial}
 \Lambda_{-2u,k,p}(c^*)
 &=\sum_{j=0}^{p}(-1)^j
 \frac{\Gamma(d-j)}{j!(p-j)!\Gamma(d-p-j)}\notag\\
 &=\sum_{j=0}^{p}(-1)^j
 \binom pj\binom{d-j-1}{p}=1.
\end{align}
For the last equality, take the coefficient of $z^p$ after summing
$(1+z)^{d-j-1}$ against $(-1)^j\binom pj$; the result is
$z^p(1+z)^{d-p-1}$.  If $p\ge d$, every reciprocal gamma factor in
\eqref{eq:regular-row-scalar} vanishes for $0\le j<d$, and hence
$\Lambda_{-2u,k,p}(c^*)=0$.  Thus the collision operator is the identity on
the regular rows outside the radical and vanishes on the regular rows
contained in it; in the high-degree range all regular rows are reproduced.

It remains to verify the generalized rows without an implicit elimination
argument.  Let $r^p\widetilde{\mathcal H}_\ell$ occur in
\eqref{eq:exceptional-generalized-fischer} and put
\[
 h=2u+2-\ell,\qquad a=\ell-u-1=d-2p-1\ge1.
\]
For $H\in\mathcal H_h$, choose $Q_H$ as in
\eqref{eq:exceptional-lift-equation} and put $b_H=r^aH$.  Iteration of the
radial transition identity gives
\begin{equation}\label{eq:all-lift-iterates}
 \Delta^\nu Q_H=
 \begin{cases}
 Q_H,&\nu=0,\\[2mm]
 (-4)^{\nu-1}
 \left(\dfrac{(a-1)!}{(a-\nu)!}\right)^2r^{a-\nu}H,
 &1\le\nu\le a,\\[2mm]
 0,&\nu>a.
 \end{cases}
\end{equation}
Apply \Cref{lem:homogeneous-monomial-contraction} term by term to
$T^*(r^pQ_H)$.  Since $d=a+2p+1$, all contributions have the form
\begin{equation}\label{eq:generalized-explicit-action}
 T^*(r^pQ_H)=\alpha_p\,r^pQ_H+\beta_p\,r^pb_H,
\end{equation}
where
\begin{align}
 \alpha_p
 &=(-1)^k2\pi^{-u}\sum_{j=0}^{p}
 c_j^*\frac{(k-2j)!}{(p-j)!\Gamma(d-p-j)},
 \label{eq:generalized-alpha}\\
 \beta_p
 &=(-1)^k\frac{\pi^{-u}}2
 \sum_{j=0}^{a+p}c_j^*\frac{(k-2j)!}
 {\Gamma(a+p+1-j)}\notag\\
 &\qquad\times
 \sum_{\nu=\max(1,j-p)}^{a}
 \frac{(-1)^{\nu-1}((a-1)!)^2}
 {\nu!(p-j+\nu)!((a-\nu)!)^2}.
 \label{eq:generalized-beta}
\end{align}
The upper limit satisfies $a+p=d-p-1<d=J$ in the collision range.  In the
high-degree range, $\ell\le2u+2$ implies
$a+p=(k+\ell-2u-2)/2\le\lfloor k/2\rfloor<J$.  Thus every coefficient used in
this calculation is present in both regimes.  Equation
\eqref{eq:exceptional-regular-binomial} gives $\alpha_p=1$.

We now show directly that $\beta_p=0$.  Substitute
\eqref{eq:exceptional-unified-coefficients} into
\eqref{eq:generalized-beta}, interchange the finite sums, and write
\begin{equation}\label{eq:beta-finite-difference}
 \beta_p=\frac{((a-1)!)^2}{4}
 \sum_{\nu=1}^{a}
 \frac{(-1)^{\nu-1}}{\nu!((a-\nu)!)^2}\,I_\nu,
\end{equation}
where
\[
 I_\nu=\sum_{j=0}^{p+\nu}
 (-1)^j\frac{\Gamma(d-j)}
 {j!\,\Gamma(a+p+1-j)\,(p-j+\nu)!}.
\]
With $\kappa=p+\nu-j$ and $d=a+2p+1$, this becomes
\begin{equation}\label{eq:finite-difference-inner-sum}
 I_\nu=\frac{(-1)^{p+\nu}p!}{(p+\nu)!}
 \sum_{\kappa=0}^{p+\nu}(-1)^{\kappa}\binom{p+\nu}{\kappa}
 \binom{a+p-\nu+\kappa}{p}.
\end{equation}
The last sum is the $(p+\nu)$-th finite difference of a polynomial in $\kappa$ of
degree $p$.  Since $\nu\ge1$, it is zero.  Therefore every $I_\nu$ vanishes,
so $\beta_p=0$.  Equations \eqref{eq:generalized-explicit-action}--
\eqref{eq:finite-difference-inner-sum} reproduce the lift representatives
$r^pQ_H$.  The regular-row calculation already reproduces the harmonic
submodule $r^p\mathcal H_\ell$, including its socle $r^pb_H$; together these
vectors span $r^p\widetilde{\mathcal H}_\ell$.  Hence $T^*$ is the identity on
every generalized harmonic row, including its head--socle extension.

Suppose first that $u+1\le k\le2u$.  Here $J=d$.  Since
$h=2u-k\le u$, the ordinary Fischer decomposition of $\mathcal P_h$ has no
exceptional generalized rows; multiplication by $r^d$ identifies its rows
with the regular rows of radial level $p\ge d$ in $\mathcal P_k$.  The
preceding calculation therefore shows that $T^*P-P\in r^d\mathcal P_h$ for
every $P\in\mathcal P_k$, which is exactly
\eqref{eq:direct-quotient-reproduction}.  Formula
\eqref{eq:exceptional-unified-coefficients} is equivalent to
\eqref{eq:direct-quotient-kernel}.  The finite-part coefficient has these
same terms for $j<d$, whereas every remaining term contains $r^d$ in the
$x$-variable.  Hence $K_{u,k}^{\mathrm{quo}}$ is a representative of the
finite-part inverse kernel on the quotient.

If $k\ge2u+1$, then $d>\lfloor k/2\rfloor$ and every regular or generalized
row is reproduced.  Their direct sum is $\mathcal P_k$, so $T^*=I$ and
\eqref{eq:direct-high-reproduction} follows.  Since the polynomial residue
has degree $2u<k$, the coefficients
\eqref{eq:exceptional-unified-coefficients} are precisely those of
$[\rho^k]\mathscr L_u$.
\end{proof}

The preceding results identify the homogeneous coefficients of the two
Laurent layers with the inverse kernels of the associated graded exceptional
Fischer filtration.

\begin{theorem}\label{thm:confluent-supersphere-duality}
For a formal series $F(\rho)$, write $[\rho^k]F$ for its
$\rho^k$-coefficient.  Let
\[
 \mathsf G_{u,k}^{[-1]}=[\rho^k]\mathscr R_u,
 \qquad
 \mathsf G_{u,k}^{[0]}=[\rho^k]\mathscr L_u.
\]
Then the exceptional homogeneous spaces fall into the following three
regimes.
\begin{enumerate}[label=\textup{(\roman*)}]
\item If $0\le k\le u$, then
\begin{equation}\label{eq:graded-confluent-low}
 \mathsf G_{u,k}^{[-1]}
 =\widehat\gamma_{u,k}H_{-2u,k}^{\mathrm{res}},
\end{equation}
and it is the inverse kernel of the nondegenerate residue pairing
$\mathcal R_{u,k}$ on the full space $\mathcal P_k$.

\item If $u+1\le k\le2u$, put $d=k-u$ and $h=2u-k$.  The residue coefficient
is
\begin{equation}\label{eq:confluent-radical-kernel}
 \mathsf G_{u,k}^{[-1]}(x,y)
 =(rt)^d\widehat\gamma_{u,h}
 H_{-2u,h}^{\mathrm{res}}(x,y),
\end{equation}
so it is the inverse kernel of $\mathsf C_{u,k}$ on the exact radical
$\mathfrak R_{u,k}=r^d\mathcal P_h$.  The finite part descends to the inverse
kernel of the ordinary Pizzetti form on
$\mathcal P_k/\mathfrak R_{u,k}$; one representative is
\begin{equation}\label{eq:graded-confluent-quotient-kernel}
 \mathsf G_{u,k}^{\mathrm{quo}}(x,y)
 =\gamma_{-2u,k}
 \sum_{j=0}^{d-1}
 (-1)^j\frac{k!}{j!(k-2j)!(k-u-j)_j}
 (rt)^js^{k-2j},
\end{equation}
where
$\gamma_{-2u,k}=(-1)^k\pi^u\Gamma(k-u)/(2k!)$.
Consequently
\begin{equation}\label{eq:graded-confluent-pairing}
 \overline{\mathsf B}_{u,k}\oplus\mathsf C_{u,k}
\end{equation}
is nondegenerate on
\begin{equation}\label{eq:graded-confluent-space}
 (\mathcal P_k/\mathfrak R_{u,k})\oplus\mathfrak R_{u,k}.
\end{equation}

\item If $k\ge2u+1$, then $\mathsf G_{u,k}^{[-1]}=0$ and
$\mathsf G_{u,k}^{[0]}$ is the ordinary inverse Pizzetti kernel on the full
space $\mathcal P_k$.
\end{enumerate}
\end{theorem}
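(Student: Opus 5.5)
The plan is to assemble the theorem from the coefficient extraction of \Cref{thm:global-log-kernel} together with \Cref{thm:exceptional-residue-reproduction}, \Cref{thm:exceptional-full-radical} and \Cref{lem:direct-exceptional-inverses}. Since $\mathscr R_u=\frac{(-1)^u\pi^u}{u!}D^u$, I first expand
\[
 D^u=\sum_{i+j\le u}\frac{u!}{i!\,j!\,(u-i-j)!}\rho^{i+2j}s^i(rt)^j .
\]
Reading off the $\rho^k$ coefficient gives $\mathsf G^{[-1]}_{u,k}$ explicitly as a polynomial in $s$ and $rt$. In particular, $\mathsf G^{[-1]}_{u,k}=0$ for $k>2u$, which is the first half of (iii).

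For (i), I compare this coefficient with the Laurent expansion of $\gamma_{\mu,k}H^{\mathrm{amb}}_{\mu,k}$ at $\mu=-2u$. When $k\le u$, the factor $\Gamma(\mu/2+k)$ has a simple pole. The term $(\mu/2)_{k-j}/(\mu/2)_k$ in \eqref{eq:ambient-seed-coefficients} has a finite limit, since the denominator $(\mu/2)_k$ is nonzero near $-2u$ for $k\le u$. Hence the residue of $\mathsf G_{\mu,k}$ equals $\operatorname{res}\gamma_{\mu,k}\cdot H^{\mathrm{res}}_{-2u,k}=\widehat\gamma_{u,k}H^{\mathrm{res}}_{-2u,k}$. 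Because $\mathscr G_\mu=\sum\rho^k\mathsf G_{\mu,k}$ coefficientwise, this residue is also $[\rho^k]\mathscr R_u$. The inverse-kernel statement is then exactly \Cref{thm:exceptional-residue-reproduction}.

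For (ii), the key algebraic step is the factorization \eqref{eq:confluent-radical-kernel}. I will prove it by a reflection symmetry of $D^u$. Write $D^u=\rho^{2u}(rt)^u\,\widetilde D^u$ with
\[
 \widetilde D=1+\rho^{-1}s/(rt)+\rho^{-2}/(rt),
\]
formally. Comparing the coefficient of $s^i(rt)^j$ in degree $k=i+2j$ with that in degree $h=2u-k$, the multinomial $\frac{u!}{i!j!(u-i-j)!}$ is invariant under $(i,j)\mapsto(i,\,u-i-j)$. This map sends $k=i+2j$ to $i+2(u-i-j)=2u-k$ and shifts the $rt$-exponent by $j-(u-i-j)=k-u=d$. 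Hence $[\rho^k]D^u=(rt)^d[\rho^{h}]D^u$, and (i) applied in degree $h\le u$ gives \eqref{eq:confluent-radical-kernel}.

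The inverse property on $\mathfrak R_{u,k}$ for the pairing $\mathsf C_{u,k}$ then reduces to the degree-$h$ residue reproduction. For $P=r^dA$, the definition \eqref{eq:transported-radical-pairing} applied in $y$ moves $t^d$ onto $A(y)$ and returns $r^d\,\widehat\gamma_{u,h}\mathcal R_{u,h,y}(H^{\mathrm{res}}_{-2u,h}A)=r^dA(x)$. Here I pair against $t^dB$ in the $y$ variable and use injectivity of $r^d$ so that the transported pairing is well defined.

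For the quotient part of (ii), I show that $[\rho^k]\mathscr L_u$ agrees with \eqref{eq:graded-confluent-quotient-kernel} modulo terms carrying $(rt)^d$. For $j<d$ the coefficient $\Gamma(\mu/2+k-j)$ is regular at $-2u$, so these finite-part coefficients are the specialization \eqref{eq:exceptional-unified-coefficients}. The terms with $j\ge d$ contain $r^d$ in $x$ and therefore vanish under $\pi_x$. \Cref{lem:direct-exceptional-inverses} then gives the quotient reproduction. Nondegeneracy of $\overline{\mathsf B}_{u,k}$ follows from \Cref{thm:exceptional-full-radical}, and nondegeneracy of $\mathsf C_{u,k}$ follows from \Cref{thm:exceptional-residue-reproduction} in degree $h$. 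Together these give \eqref{eq:graded-confluent-pairing}.

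Part (iii) is \Cref{lem:direct-exceptional-inverses} combined with \Cref{thm:exceptional-full-radical}.

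The main obstacle is the quotient identification in (ii). The terms with $j\ge d$ in the finite part involve derivatives of $\Gamma$ and the logarithm, not just specializations. I must check that they only enter through the $(rt)^j$ monomials with $j\ge d$, and hence lie in $r^d\mathcal P_h$ in $x$. Once this is verified, well-definedness on the quotient is immediate.
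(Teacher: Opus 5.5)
Your proposal is correct and follows essentially the paper's proof. The paper likewise cites the low-degree residue theorem for (i), reindexes $j=d+\ell$ in $[\rho^k]D^u$ (equivalent to your reflection bijection $(i,j)\mapsto(i,u-i-j)$) to get the radical kernel, and invokes the direct-inverse lemma together with the exact-radical theorem for the quotient and high-degree statements. The step you flag as the main obstacle is immediate, because the Laurent expansion is taken coefficientwise on the monomials $(rt)^js^{k-2j}$, so every derivative or logarithmic contribution with $j\ge d$ carries $r^d$ in $x$; the paper makes this same observation at the end of that lemma's proof.
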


\begin{proof}
The low-degree statement is \Cref{thm:exceptional-residue-reproduction}.
In the collision range, every monomial of degree $k$ in the polynomial $D^u$
contains at least $(rt)^d$.  Writing its index as $j=d+\ell$ gives
\[
 [\rho^k]\mathscr R_u
 =(rt)^d\frac{(-1)^u\pi^u}{h!d!}
 H_{-2u,h}^{\mathrm{res}}
 =(rt)^d\widehat\gamma_{u,h}H_{-2u,h}^{\mathrm{res}},
\]
because $u-h=d$.  This is the inverse of the transported radical pairing by
\Cref{thm:exceptional-residue-reproduction}.  The quotient formula and its
reproducing property are exactly
\Cref{lem:direct-exceptional-inverses}; in particular, no generic
reproducing identity is specialized inside a fixed coordinate realization.
For $k\ge2u+1$, the polynomial residue has no coefficient and the direct
high-degree identity in the same lemma proves that the logarithmic coefficient
is the ordinary inverse kernel.
\end{proof}

It is important to distinguish the transported radical form from a
naive derivative of the fixed-degree Pizzetti coefficient.

\begin{remark}
The radical pairing is not obtained by differentiating only the scalar
Pizzetti coefficient in degree $k$ while holding the fixed super Laplacian
constant; that derivative still vanishes on the radical.  It is selected
canonically by the residue of the inverse kernel, equivalently by reflection
to degree $h=2u-k$, application of the low-degree residue pairing, and
transport by $r^{k-u}$.
\end{remark}

\section{Application to radial \texorpdfstring{$q$}{q}-monogenic projection}\label{sec:q-application}

We now use the intrinsic radial $q$-vector derivative of
\cite{BarseghyanBoryReyesSchneiderZhang2026}.  The specialization at integral
superdimension and the faithful finite superspace realization used below are
those of \cite{BoryReyesSchneiderBarseghyanZhang2026}.  We recall the necessary two-point rules, derive the projected zonal
symbol, and then construct finite-block tensor representatives of the
projection by a dual-basis correction.  The latter correction is necessary because the
one-sided radial projector does not commute with supersphere contraction.  Fix an integral superdimension
$M$.  Let $q$ be an indeterminate and write
\[
 [a]_q=\frac{1-q^a}{1-q};
\]
when numerical inequalities are invoked we specialize to $0<q<1$.  Let
$\mathcal R_M(x,y)$ be the two-vector radial algebra at superdimension $M$,
work over $\mathbb K=\mathbb C(q)$, and set
\[
 r=x^2,\qquad s=\{x,y\},\qquad t=y^2,\qquad
 \omega=x\wedge y.
\]
The even bidegree-$(k,k)$ zonal module is
\begin{equation}\label{eq:zonal-module}
 \mathcal Z_{M,k}
 =\bigoplus_{j=0}^{\lfloor k/2\rfloor}\mathbb K(rt)^js^{k-2j}
 \oplus
 \bigoplus_{j=0}^{\lfloor(k-1)/2\rfloor}
 \mathbb K\omega(rt)^js^{k-1-2j}.
\end{equation}
This normal form follows from the scalar--bivector decomposition of a
two-vector radial algebra together with the bidegree constraints; see
\cite{Sommen1997,DeSchepperGuzmanSommen2017}.

For $A\in\mathbb K[r,s,t]$, put
\begin{equation}\label{eq:scalar-qderivative}
 \partial_{x,q}^{\mathrm{sc}}A
 =(1+q)x\,\delta_{q^2}^{(r)}A
 +2T_{q^2}^{(r)}y\,\delta_q^{(s)}A,
\end{equation}
where $T_P^{(z)}f(z)=f(Pz)$ and
$\delta_P^{(z)}f=(f(z)-f(Pz))/((1-P)z)$.  These are the standard Jackson
shift and difference operators; see \cite{KacCheung2002}.  On the two zonal sectors,
\begin{align}
 \partial_{x,q}^{M,\mathrm R}(A)&=\partial_{x,q}^{\mathrm{sc}}A,
 \label{eq:qder-scalar}\\
 \partial_{x,q}^{M,\mathrm R}(\omega B)
 &=[M-1]_q\,yB+q^{M-1}\omega\,\partial_{x,q}^{\mathrm{sc}}B.
 \label{eq:qder-bivector}
\end{align}

For clarity, we state the precise finite-block Fischer result imported from
the companion papers.

\begin{proposition}[Finite localized $q$-Fischer input]
\label{prop:imported-q-fischer}
Let $Y_0=\{y_1,\ldots,y_N\}$ be finite, and let
$\mathcal V_{M,k}^{Y_0}$ be the $x$-homogeneous degree-$k$ component of the
specialized radial algebra generated by $x$ and $Y_0$.  After extension of scalars from $\mathbb R(q)$ to $\mathbb C(q)$, it is a
finite free module over the passive scalar ring
\[
 \mathcal T_M^{Y_0}
 =\mathbb C(q)[c_{ij}:1\le i\le j\le N],
 \qquad c_{ij}=\tfrac12\{y_i,y_j\}.
\]
Define
\[
 \mathcal K_{M,k}^{Y_0}
 =\partial_{x,q}^{M,Y_0,\mathrm R}L_x
   \big|_{\mathcal V_{M,k}^{Y_0}},
 \qquad
 \mathcal D_{M,k}^{Y_0}=\det\mathcal K_{M,k}^{Y_0}.
\]
After localization by $\mathcal D_{M,k}^{Y_0}$,
\begin{equation}\label{eq:imported-fischer-decomposition}
 \mathcal V_{M,k+1}^{Y_0}
 =\Ker\!\left(
 \partial_{x,q}^{M,Y_0,\mathrm R}
 \big|_{\mathcal V_{M,k+1}^{Y_0}}
 \right)
 \oplus L_x\mathcal V_{M,k}^{Y_0},
\end{equation}
and the projection onto the first summand is
\begin{equation}\label{eq:imported-fischer-projector}
 \Pi_{M,k+1}^{(q),Y_0}
 =I-L_x(\mathcal K_{M,k}^{Y_0})^{-1}
 \partial_{x,q}^{M,Y_0,\mathrm R}.
\end{equation}
For degree zero we set $\Pi_{M,0}^{(q),Y_0}=I$.  Under a numerical
specialization $q=q_0\in(0,1)$, the same formulas hold whenever the evaluated
determinant is nonzero.  If the fixed superspace has bosonic dimension $m$
and $|\{x\}\cup Y_0|\le m$, its standard coordinate representation is
faithful, so the projector has an unambiguous coordinate realization.
\end{proposition}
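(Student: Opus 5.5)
The statement is an imported finite-block Fischer result, so the plan is to reduce it to linear algebra over the passive scalar ring, and then cite the companion papers for the single nontrivial input, namely generic invertibility of $\mathcal K_{M,k}^{Y_0}$.

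\textbf{Step 1: finiteness and freeness.} Using the radial-algebra normal form (the multi-vector analogue of \eqref{eq:zonal-module}), I will show that $\mathcal V_{M,k}^{Y_0}$ is spanned over $\mathcal T_M^{Y_0}$ by finitely many monomials. These are of the form $r^j$ times products of distinct $y_i$'s and wedges with $x$, each of $x$-degree $k$. The relations in the radial algebra only involve the $c_{ij}$, $r$ and $\{x,y_i\}$. Hence a spanning set with a triangular normal form gives a basis, and the module is finite free. Extension of scalars from $\mathbb R(q)$ to $\mathbb C(q)$ preserves this.

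\textbf{Step 2: the operator and its determinant.} The rules \eqref{eq:scalar-qderivative}--\eqref{eq:qder-bivector}, extended blockwise to several passive vectors, give $\partial_{x,q}^{M,Y_0,\mathrm R}\colon\mathcal V_{M,k+1}^{Y_0}\to\mathcal V_{M,k}^{Y_0}$. The map $L_x$ goes the other way. Therefore $\mathcal K_{M,k}^{Y_0}$ is a $\mathcal T_M^{Y_0}$-linear endomorphism of a finite free module, and its determinant $\mathcal D_{M,k}^{Y_0}$ is defined.

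\textbf{Step 3: the decomposition.} After inverting $\mathcal D_{M,k}^{Y_0}$, the operator $\mathcal K=\partial L_x$ is invertible. Put $\Pi=I-L_x\mathcal K^{-1}\partial$. A direct computation gives three identities:
\[
\partial\Pi=\partial-\mathcal K\mathcal K^{-1}\partial=0,\qquad \Pi L_x=L_x-L_x\mathcal K^{-1}\mathcal K=0,\qquad \Pi^2=\Pi.
\]
From the first two, $\Pi$ maps into the kernel and annihilates $L_x\mathcal V_{M,k}^{Y_0}$. Moreover, $I-\Pi=L_x(\cdots)$ maps into $L_x\mathcal V_{M,k}^{Y_0}$. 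It remains to show the intersection is zero. If $L_xv$ lies in the kernel, then $\mathcal Kv=0$, so $v=0$ and $L_xv=0$. This yields \eqref{eq:imported-fischer-decomposition} and identifies the projector as \eqref{eq:imported-fischer-projector}. Degree zero is the convention $\Pi_{M,0}=I$.

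\textbf{Step 4: specialization and realization.} The numerical statement follows because every entry of $\mathcal K$, of its adjugate, and of the determinant is a polynomial in the $c_{ij}$ with coefficients in $\mathbb C(q)$ that are regular at $q_0\in(0,1)$. This uses that the denominators $1-q^a$ do not vanish there. So Cramer's rule specializes whenever the evaluated determinant is nonzero. For faithfulness, at most $m$ vectors can be placed in general position in the bosonic directions. Then the radial algebra maps injectively into coordinate polynomials (Sommen's faithfulness criterion), so the projector is unambiguous.

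\textbf{Main obstacle.} The real content, and the expected obstacle, is that $\mathcal D_{M,k}^{Y_0}$ is not the zero element. Otherwise the localization is the zero ring. I will take this from \cite{BarseghyanBoryReyesSchneiderZhang2026,BoryReyesSchneiderBarseghyanZhang2026}. If I had to argue it myself, I would compute the leading $q\to1$ or triangular term of $\mathcal K$ in the normal-form basis. That should reduce to the classical $\partial_xx$ action with nonzero eigenvalues $[2j+\cdots]$ at generic $q$.
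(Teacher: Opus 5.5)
Your argument is correct, and it does more than the paper's proof. The paper argues nothing itself: it imports the decomposition and the projector formula from \cite[Theorem~5.5]{BoryReyesSchneiderBarseghyanZhang2026}, with the universal version \cite[Theorem~5.3]{BarseghyanBoryReyesSchneiderZhang2026}, and the faithfulness range from \cite[Proposition~3.4]{BoryReyesSchneiderBarseghyanZhang2026}. Your Step~3 shows that the direct sum and the formula for $\Pi_{M,k+1}^{(q),Y_0}$ are purely formal once three things hold: $\mathcal V_{M,k}^{Y_0}$ is finite free, $\partial_{x,q}^{M,Y_0,\mathrm R}$ is $\mathcal T_M^{Y_0}$-linear and lowers $x$-degree by one, and $\mathcal D_{M,k}^{Y_0}$ is inverted. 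Your Cramer's-rule remark then handles the numerical specialization, since all entries are Laurent polynomials in $q$. So the only genuinely imported inputs are the normal form, the multi-vector definition of the operator (which your Step~2 takes for granted), and faithfulness. The citation route supplies all of these at once; yours makes clear why the projector identity holds.

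Two small repairs are needed. First, the monomials in Step~1 must include factors $\{x,y_i\}^{a_i}$; with only $r^j$ and at most one $x$ in a wedge you reach only $x$-degrees $2j$ and $2j+1$. Second, ``general position in the bosonic directions'' is best made precise by composing the realization with the map that sets all Grassmann coordinates to zero. That map lands in the classical $m$-dimensional realization, where faithfulness for at most $m$ vectors is classical.

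Your ``main obstacle'' is misidentified. Nonvanishing of $\mathcal D_{M,k}^{Y_0}$ is not asserted by the statement; if the determinant vanished, the localized assertions would simply be vacuous. It also fails in general. By the one-vector rule $\partial_{x,q}x=[M]_q$ (as in the proof of \Cref{prop:q-obstructions}), one has $\mathcal K_{M,0}^{Y_0}(1)=[M]_q$, which is identically zero at $M=0$, so $\mathcal D_{0,0}^{Y_0}=0$. This is why \Cref{thm:q-fischer-transfer} takes nonvanishing as an explicit hypothesis rather than quoting it.

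Your $q\to1$ heuristic would also fail exactly where this paper needs it. At $q=1$ you recover the classical $\partial_xx$, which is singular at the exceptional superdimensions for suitable $k$. Eigenvalues of the form $[a]_q$ do not rescue this, since $[0]_q=0$ for every $q$.
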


\begin{proof}
After extension of scalars, the decomposition and projector formula are
\cite[Theorem~5.5]{BoryReyesSchneiderBarseghyanZhang2026}; their universal
formal-dimension version is
\cite[Theorem~5.3]{BarseghyanBoryReyesSchneiderZhang2026}.  The sufficient
faithfulness range is
\cite[Proposition~3.4]{BoryReyesSchneiderBarseghyanZhang2026}.
\end{proof}

For the two-vector block $Y_0=\{y\}$, write
$\Pi_{M,k}^{(q)}=\Pi_{M,k}^{(q),\{y\}}$.  For $k=0$ set
$Z_{M,0}^{(q)}=1$.  The following theorem defines $Z_{M,k}^{(q)}$ for
$k\ge1$ directly in the smaller zonal localization and then identifies it
with the Fischer projection in the common localization.  Henceforth
assume $k\geq1$ and, for compactness, write
\begin{align}
 \lambda_{M,k}(q)&=[M-1]_q-q^{M-1}[k-1]_q,\\
 d_{M,k}(q)&=\lambda_{M,k}(q)+[k]_q,\\
 \theta_{M,k,j}(q)&=\lambda_{M,k}(q)-q^{M+2j-1}[k-2j]_q.
\end{align}

We first solve the two-point right $q$-monogenic projection problem in
closed form.

\begin{theorem}\label{thm:closed-recurrence}
Let $k\geq1$.  After localization by
\begin{equation}\label{eq:zonal-denominator}
 \mathfrak d_{M,k}^{\mathrm{zon}}(q)
 =d_{M,k}(q)
  \prod_{j=1}^{\lfloor(k-1)/2\rfloor}\theta_{M,k,j}(q),
\end{equation}
there is a unique element, denoted $Z_{M,k}^{(q)}\in\mathcal Z_{M,k}$,
such that
\begin{equation}\label{eq:zonal-characterization}
 \partial_{x,q}^{M,\mathrm R}Z_{M,k}^{(q)}=0,
 \qquad Z_{M,k}^{(q)}\equiv s^k\pmod{x\mathcal R_M(x,y)}.
\end{equation}
In the common localization in which the full Fischer determinant
$\mathcal D_{M,k-1}^{\{y\}}$ is also inverted,
\begin{equation}\label{eq:Z-def}
 Z_{M,k}^{(q)}(x,y)=\Pi_{M,k}^{(q)}(s^k).
\end{equation}
Writing
\begin{equation}\label{eq:Z-coeff-expansion}
 Z_{M,k}^{(q)}
 =\sum_j a_{k,j}^{M,q}(rt)^js^{k-2j}
 +\sum_j b_{k,j}^{M,q}\omega(rt)^js^{k-1-2j},
\end{equation}
one has
\begin{align}
 a_{k,0}^{M,q}&=\frac{\lambda_{M,k}(q)}{d_{M,k}(q)},&
 b_{k,0}^{M,q}&=-\frac{2[k]_q}{d_{M,k}(q)},\label{eq:initial-coeffs}\\
 b_{k,j}^{M,q}
 &=\frac{4q^{M+4j-3}[k-2j]_q[k+1-2j]_q}
 {[2j]_q\theta_{M,k,j}(q)}b_{k,j-1}^{M,q},
 &&1\le j\le\left\lfloor\frac{k-1}{2}\right\rfloor,
 \label{eq:b-recurrence}\\
 a_{k,j}^{M,q}
 &=-\frac{\lambda_{M,k}(q)}{2q^{2j}[k-2j]_q}b_{k,j}^{M,q},
 &&0\le j\le\left\lfloor\frac{k-1}{2}\right\rfloor.
 \label{eq:a-recurrence}
\end{align}
At even degree $k=2\ell$, the final scalar coefficient is
\begin{equation}\label{eq:even-endpoint}
 a_{k,\ell}^{M,q}
 =-\frac{2q^{M+k-3}}{[k]_q}b_{k,\ell-1}^{M,q}.
\end{equation}
Moreover, for $0\le j\le\lfloor(k-1)/2\rfloor$,
\begin{equation}\label{eq:b-product}
 b_{k,j}^{M,q}
 =-\frac{2[k]_q}{d_{M,k}(q)}
 \frac{4^jq^{2j^2+(M-1)j}
 \prod_{h=1}^j[k-2h]_q[k+1-2h]_q}
 {\prod_{h=1}^j[2h]_q\theta_{M,k,h}(q)}.
\end{equation}
\end{theorem}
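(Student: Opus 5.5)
We plan a direct coefficient computation in the zonal module \eqref{eq:zonal-module}. Write a general element $Z$ of $\mathcal Z_{M,k}$ in the form \eqref{eq:Z-coeff-expansion} with unknown coefficients $a_{k,j}$, $b_{k,j}$. We then compute $\partial_{x,q}^{M,\mathrm R}Z$ using \eqref{eq:scalar-qderivative}--\eqref{eq:qder-bivector}. On a monomial $(rt)^js^{m}$ the Jackson operators give
\[
\delta_{q^2}^{(r)}(r^js^m)=[j]_{q^2}r^{j-1}s^m,\qquad
T_{q^2}^{(r)}\delta_q^{(s)}(r^js^m)=q^{2j}[m]_q r^js^{m-1}.
\]
Hence the scalar part maps to $x$-terms and $y$-terms in bidegree $(k-1,k)$. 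The bivector part contributes $[M-1]_q y B$ plus $q^{M-1}\omega\,\partial^{\mathrm{sc}}_{x,q}B$.

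To obtain a normal form, the products $\omega x$ and $\omega y$ must be reduced into the radial algebra. We use $\omega=\tfrac12(xy-yx)$ and $xy+yx=s$, which give $\omega x=\tfrac12(rs\text{-type terms})\ldots$. More precisely,
\[
x\omega=\tfrac12(s x- 2r y)\quad\text{and}\quad y\omega=\tfrac12(2tx-sy)
\]
up to the ordering convention. After this reduction, $\partial_{x,q}Z$ lies in the free module spanned by $x(rt)^jt^{?}s^{m}$ and $y(rt)^js^{m}$, with bidegree bookkeeping fixing the exponents.

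Setting each coefficient to zero yields a linear recursion. The $y$-coefficients pair $a_{k,j}$ with $b_{k,j}$; this should be \eqref{eq:a-recurrence}, with the factor $\lambda_{M,k}$ collecting $[M-1]_q$ together with the shifted $q^{M-1}[k-1]_q$ coming from $\omega\,\partial^{\mathrm{sc}}$ acting on the $s$-power. The $x$-coefficients link $a_{k,j}$, $b_{k,j-1}$ and $b_{k,j}$. Eliminating $a$ then gives a two-term recursion for $b$ with pivot $\theta_{M,k,j}$, which is \eqref{eq:b-recurrence}.

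The normalization $Z\equiv s^k$ modulo $x\mathcal R_M$ is handled as follows. Every term with $j\ge1$, or containing $\omega$, involves $x$ beyond $s$ in a way that is killed modulo $x\mathcal R_M$, except that $\omega$ itself contains $-\tfrac12 s+ yx$-type pieces. So we must check precisely that the condition reads $a_{k,0}-\tfrac12 b_{k,0}\cdot(\text{coefficient}) = 1$. Combining this with the $j=0$ equation gives the $2\times2$ system with determinant $d_{M,k}$, producing \eqref{eq:initial-coeffs}. The even endpoint \eqref{eq:even-endpoint} comes from the last $x$-equation, where no $b_{k,\ell}$ exists. The product formula \eqref{eq:b-product} follows by telescoping \eqref{eq:b-recurrence}.

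Uniqueness follows because the system is triangular with pivots $d_{M,k}$ and $\theta_{M,k,j}$, which are invertible in the localization \eqref{eq:zonal-denominator}. We must also check that the number of equations matches the number of unknowns, so that no overdetermined consistency condition remains. This is the main obstacle: the final $x$-equation at odd $k$ and the endpoint equation at even $k$ must be shown either compatible or identical to ones already imposed. We would verify this by a direct count of the monomials in bidegree $(k-1,k)$, namely $\lceil k/2\rceil$ $x$-type and $\lfloor k/2\rfloor+1$ $y$-type. If necessary we would use the identity $q^{2j}[k-2j]_q+[2j]_q=[k]_q$ to show that the surplus equation is implied.

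For \eqref{eq:Z-def}, in the common localization Proposition~\ref{prop:imported-q-fischer} shows that $\Pi^{(q)}_{M,k}(s^k)$ lies in $\ker\partial^{M,\mathrm R}_{x,q}$ and differs from $s^k$ by an element of $L_x\mathcal V_{M,k-1}$, that is, modulo $x\mathcal R_M$. The $\mathfrak{osp}$/radial covariance of the projector keeps it inside the zonal module, because $\mathcal V^{\{y\}}_{M,k}$ of bidegree $(k,k)$ is $\mathcal Z_{M,k}$. Uniqueness then identifies the two elements.
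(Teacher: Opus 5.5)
Your plan is the same as the paper's proof, and it will work: expand $Z$ in the basis of $\mathcal Z_{M,k}$, apply the two-sector rules, reduce the products of $\omega$ with $x$ and $y$, solve level by level, and identify the result with $\Pi_{M,k}^{(q)}(s^k)$ by uniqueness.

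However, the step you single out as the main obstacle comes from a miscount and does not exist. After reduction, $\partial_{x,q}^{M,\mathrm R}Z$ is spanned by two families of monomials:
\begin{itemize}
\item $y(rt)^js^{k-1-2j}$ for $0\le j\le\lfloor(k-1)/2\rfloor$, which is $\lceil k/2\rceil$ monomials;
\item $xr^{j-1}t^js^{k-2j}$ for $1\le j\le\lfloor k/2\rfloor$, which is $\lfloor k/2\rfloor$ monomials.
\end{itemize}
So there are $k$ equations, not $k+1$. Your numbers $\lceil k/2\rceil$ and $\lfloor k/2\rfloor+1$ are the numbers of unknowns $b_{k,j}$ and $a_{k,j}$. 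With the normalization you get $k+1$ equations in $k+1$ unknowns, and they are block triangular:
\begin{itemize}
\item At level $0$, the $y$-equation $2[k]_qa_{k,0}+\lambda_{M,k}(q)b_{k,0}=0$ together with the normalization fixes $(a_{k,0},b_{k,0})$, with determinant $-d_{M,k}(q)$.
\item For $1\le j\le\lfloor(k-1)/2\rfloor$, the level-$j$ equations
\[
 2q^{2j}[k-2j]_qa_{k,j}+\lambda_{M,k}(q)b_{k,j}=0,\qquad
 [2j]_qa_{k,j}+\tfrac12q^{M-1}[2j]_qb_{k,j}
 =-2q^{M+2j-3}[k+1-2j]_qb_{k,j-1}
\]
fix $(a_{k,j},b_{k,j})$ from $b_{k,j-1}$, with determinant $-[2j]_q\theta_{M,k,j}(q)$.
\item For $k=2\ell$, the lone $x$-equation at level $\ell$ fixes $a_{k,\ell}$ with pivot $[k]_q$; this is \eqref{eq:even-endpoint}.
\end{itemize}
For odd $k$, the last $x$-equation is one of the two equations at level $(k-1)/2$, not a surplus. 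No compatibility argument is needed, and existence and uniqueness follow at once from these pivots.

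Three details you left open should be fixed as the paper does.
\begin{itemize}
\item \emph{Normalization.} Since $\omega=xy-\tfrac12 s$ (your ``$-\tfrac12 s+yx$'' equals $-\omega$), modulo $x\mathcal R_M(x,y)$ you have $\omega\equiv-\tfrac12 s$ and $(rt)^j\equiv0$ for $j\ge1$. So the normalization is exactly $a_{k,0}-\tfrac12 b_{k,0}=1$.
\item \emph{Order of products.} The products produced by $q^{M-1}\omega\,\partial_{x,q}^{\mathrm{sc}}B$ are $\omega x=\tfrac12 sx-ry$ and $\omega y=tx-\tfrac12 sy$. Your formulas have these values but label them $x\omega$ and $y\omega$, which are their negatives. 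With that order the $y$-coefficient would come out as $[M-1]_q+q^{M-1}[k-1]_q$ instead of $\lambda_{M,k}(q)$. The identity actually used there is $[2j]_q+q^{2j}[k-1-2j]_q=[k-1]_q$.
\item \emph{Identification with the projector.} For \eqref{eq:Z-def} the paper runs the uniqueness the other way. By the normalization, $Z-s^k=b_{k,0}\,xy\,s^{k-1}+r(\cdots)$ lies in $L_x\mathcal V_{M,k-1}^{\{y\}}$. Hence $Z$ and $\Pi_{M,k}^{(q)}(s^k)$ are null representatives of the same class and coincide by directness of \eqref{eq:imported-fischer-decomposition}. This avoids having to show $\Pi_{M,k}^{(q)}(s^k)\in\mathcal Z_{M,k}$. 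If you keep your direction, the correct justification is that the projector preserves $y$-degree and Clifford parity, not $\mathfrak{osp}$ covariance.
\end{itemize}
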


\begin{proof}
Put
\[
 S_{k,j}=(rt)^js^{k-2j},\qquad
 W_{k,j}=\omega(rt)^js^{k-1-2j}.
\]
The rules \eqref{eq:qder-scalar}--\eqref{eq:qder-bivector} give
\begin{align}
 \partial_{x,q}^{M,\mathrm R}S_{k,j}
 &=[2j]_qxr^{j-1}t^js^{k-2j}
 +2q^{2j}[k-2j]_qy(rt)^js^{k-1-2j},\label{eq:zonal-S-action}\\
 \partial_{x,q}^{M,\mathrm R}W_{k,j}
 &=q^{M-1}\frac{[2j]_q}{2}xr^{j-1}t^js^{k-2j}
 +2q^{M+2j-1}[k-1-2j]_qxr^jt^{j+1}s^{k-2-2j}\notag\\
 &\quad+\lambda_{M,k}(q)y(rt)^js^{k-1-2j}.
 \label{eq:zonal-W-action}
\end{align}
Insert $Z=\sum_ja_jS_{k,j}+\sum_jb_jW_{k,j}$.  At level $j$ the coefficients
of the two independent vector monomials are
\begin{align}
 2q^{2j}[k-2j]_qa_j+\lambda_{M,k}(q)b_j&=0,\label{eq:zonal-y-system}\\
 [2j]_qa_j+q^{M-1}\frac{[2j]_q}{2}b_j
 +2q^{M+2j-3}[k+1-2j]_qb_{j-1}&=0.
 \label{eq:zonal-x-system}
\end{align}
Their determinant is
$-[2j]_q\theta_{M,k,j}(q)$.  Modulo the left ideal $x\mathcal R_M(x,y)$ one has
$\omega=x\wedge y=xy-s/2\equiv-s/2$.  Hence the normalization in
\eqref{eq:zonal-characterization} is $a_0-b_0/2=1$, which, together with
\eqref{eq:zonal-y-system} at $j=0$, gives
\eqref{eq:initial-coeffs}.  Solving
\eqref{eq:zonal-y-system}--\eqref{eq:zonal-x-system} successively gives
\eqref{eq:b-recurrence}--\eqref{eq:a-recurrence}.  At even degree the last
bivector coefficient is absent, and the remaining scalar equation is
\eqref{eq:even-endpoint}; hence no extra terminal denominator occurs.
Iteration gives \eqref{eq:b-product}.  The pivots listed in
\eqref{eq:zonal-denominator} prove existence and uniqueness in the zonal
localization.  In the common localization where
$\mathcal D_{M,k-1}^{\{y\}}$ is also inverted, the determinant-localized
Fischer decomposition of \cite{BarseghyanBoryReyesSchneiderZhang2026}
provides the unique null representative of the same congruence class.
Therefore it agrees with $Z_{M,k}^{(q)}$, proving \eqref{eq:Z-def}.
\end{proof}

Let $\mathfrak r$ denote reversion and let $\tau$ interchange $x$ and
$y$.  Define the left derivative in the $y$-variable by
$\partial_{y,q}^{M,\mathrm L}
 =\mathfrak r\partial_{y,q}^{M,\mathrm R}\mathfrak r$.
On a faithful coordinate realization, an orthosymplectic transformation $g$
is called admissible for the chosen finite block if its simultaneous linear
action on all represented vector variables maps that block to itself.  It
then induces an automorphism $\rho(g)$ of the radial coefficient and value
module.

The projected zonal symbol is null in both variables, with opposite
handedness, and is equivariant under simultaneous admissible
orthosymplectic transformations.

\begin{theorem}\label{thm:q-symmetry}
The projected zonal symbol satisfies
\begin{equation}\label{eq:two-sided-proved}
 \partial_{x,q}^{M,\mathrm R}Z_{M,k}^{(q)}=0,
 \qquad
 \partial_{y,q}^{M,\mathrm L}Z_{M,k}^{(q)}=0.
\end{equation}
On every faithful two-supervector realization it is covariant under every
admissible simultaneous orthosymplectic transformation $g$.  Here $\rho(g)$
denotes the induced action on the
radial coefficient/value module (conjugation on Clifford values together with
the natural change of vector variables):
\begin{equation}\label{eq:kernel-covariance}
 Z_{M,k}^{(q)}(gx,gy)=\rho(g)Z_{M,k}^{(q)}(x,y).
\end{equation}
\end{theorem}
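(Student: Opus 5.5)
The plan is to derive all three assertions from the uniqueness characterization \eqref{eq:zonal-characterization} in \Cref{thm:closed-recurrence}, together with two structural symmetries of the two-vector radial algebra: the swap $\tau$ and the reversion $\mathfrak r$.

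\emph{Step 1 (right monogenicity in $x$).} The first identity in \eqref{eq:two-sided-proved} is part of the defining property \eqref{eq:zonal-characterization}. Nothing further is needed there.

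\emph{Step 2 (invariance of $Z$ under $\mathfrak r\tau$).} I will show that $Z_{M,k}^{(q)}$ is fixed by the composite $\mathfrak r\tau$. Under $\tau$ the scalar invariants transform as
\[
 r\leftrightarrow t,\qquad s\mapsto s,\qquad \omega\mapsto y\wedge x=-\omega,
\]
so every $S_{k,j}=(rt)^js^{k-2j}$ is $\tau$-fixed. Reversion fixes the scalars $r,s,t$. It sends $\omega=\tfrac12(xy-yx)$ to $\tfrac12(yx-xy)=-\omega$. Moreover, $\omega$ commutes with the scalars $r,s,t$, so reversion of $W_{k,j}$ only flips the sign of the bivector factor. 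Hence $\mathfrak r\tau$ fixes every basis element $S_{k,j}$ and $W_{k,j}$ of \eqref{eq:zonal-module}. Since the coefficients $a_{k,j}^{M,q},b_{k,j}^{M,q}$ are passive scalars in $\mathbb K$, this gives $\mathfrak r\tau Z_{M,k}^{(q)}=Z_{M,k}^{(q)}$.

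\emph{Step 3 (left monogenicity in $y$).} I will use the intertwining relation $\tau\,\partial_{x,q}^{M,\mathrm R}\,\tau=\partial_{y,q}^{M,\mathrm R}$. This holds because the intrinsic radial derivative is defined by the same rules \eqref{eq:scalar-qderivative}--\eqref{eq:qder-bivector} with the roles of the two vector variables exchanged. Then
\[
 \partial_{y,q}^{M,\mathrm L}Z
 =\mathfrak r\,\partial_{y,q}^{M,\mathrm R}\,\mathfrak rZ
 =\mathfrak r\,\partial_{y,q}^{M,\mathrm R}\,\tau Z
 =\mathfrak r\tau\,\partial_{x,q}^{M,\mathrm R}Z=0 .
\]
Here the middle equality uses Step 2 in the form $\mathfrak rZ=\tau Z$, since $\mathfrak r$ and $\tau$ are commuting involutions.

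The main obstacle is this intertwining, and specifically the bivector rule \eqref{eq:qder-bivector}. Swapping $x$ and $y$ turns $\omega$ into $-\omega$ and $r$ into $t$, and the order of the Clifford factors in $[M-1]_q\,yB$ has to match after the swap. I would check it first on the basis elements $W_{k,j}$ using \eqref{eq:zonal-W-action}. If this direct check fails, I would fall back on the intrinsic definition of the derivative in \cite{BarseghyanBoryReyesSchneiderZhang2026}, which is symmetric in the vector variables of the radial algebra.

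\emph{Step 4 (covariance).} On a faithful realization, an admissible $g$ induces an algebra automorphism $\rho(g)$ of the radial algebra that sends $x\mapsto gx$ and $y\mapsto gy$. It fixes the passive scalars $r,s,t$ by orthosymplectic invariance, and it acts on Clifford values by conjugation. I will argue that $\rho(g)$ commutes with $\partial_{x,q}^{M,\mathrm R}$, since the defining rules involve only the radial operations, which $g$ preserves. I will also check that $\rho(g)$ maps the left ideal $x\mathcal R_M(x,y)$ into itself and fixes $s^k$. Then $\rho(g)Z$ satisfies \eqref{eq:zonal-characterization}, so uniqueness in \Cref{thm:closed-recurrence} gives $\rho(g)Z=Z$. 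Read on a faithful realization, this is exactly \eqref{eq:kernel-covariance}.
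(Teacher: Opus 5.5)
Your Steps 1--3 follow the paper's own argument. Both $\mathfrak r$ and $\tau$ fix $s$ and $rt$ and send $\omega\mapsto-\omega$, so $\mathfrak rZ_{M,k}^{(q)}=\tau Z_{M,k}^{(q)}$. The relabelling identity $\partial_{y,q}^{M,\mathrm R}\tau=\tau\,\partial_{x,q}^{M,\mathrm R}$, followed by conjugation with $\mathfrak r$, then gives left nullity in $y$. The intertwining you call the main obstacle is not really one. $\partial_{y,q}^{M,\mathrm R}$ is by definition the same intrinsic operator with the two variables exchanged, with its bivector rule written for $y\wedge x=-\omega$. Applying $\tau$ to \eqref{eq:qder-bivector} therefore reduces the check to the relabelled scalar rule.

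The genuine gap is in Step 4, which aims at the wrong statement: the theorem asserts covariance, not invariance.

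\emph{Why uniqueness cannot be applied.} Your $\rho(g)$ sends $x\mapsto gx$, $y\mapsto gy$ and fixes $r,s,t$, so it sends $\omega$ to $gx\wedge gy$. The bidegree-$(1,1)$ part of $\mathcal R_M(x,y)$ is $\mathbb K s\oplus\mathbb K\omega$, and for a general $g$ the element $gx\wedge gy$ does not lie in it. Consequently:
\begin{enumerate}[label=\textup{(\alph*)}]
\item $\rho(g)$ does not preserve $\mathcal Z_{M,k}$;
\item it does not preserve the left ideal $x\mathcal R_M(x,y)$, since it sends $x$ to $gx$;
\item $\partial_{x,q}^{M,\mathrm R}$ is not a priori defined on $\rho(g)Z_{M,k}^{(q)}$, because it is an intrinsic radial operator rather than a coordinate differential operator (compare \Cref{prop:q-obstructions}(ii)).
\end{enumerate}
So the uniqueness clause of \Cref{thm:closed-recurrence} cannot be invoked.

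\emph{Why the conclusion is false.} The would-be conclusion $\rho(g)Z_{M,k}^{(q)}=Z_{M,k}^{(q)}$ means $Z_{M,k}^{(q)}(gx,gy)=Z_{M,k}^{(q)}(x,y)$. This fails in general, because $b_{k,0}^{M,q}=-2[k]_q/d_{M,k}(q)\neq0$ and the bivector terms move. If you instead mean $\rho(g)$ to be substitution followed by inverse conjugation of values, then $\rho(g)$ fixes $x$ and $y$, hence all of $\mathcal R_M(x,y)$, and the uniqueness argument is vacuous.

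\emph{The fix.} What you need is already in the first two sentences of your Step 4. $\rho(g)$ is multiplicative, fixes the coefficients in $\mathbb K$ and the invariants $r,s,t$, and sends $x,y$ to $gx,gy$. Applying it term by term to \eqref{eq:Z-coeff-expansion} therefore gives $Z_{M,k}^{(q)}(gx,gy)=\rho(g)Z_{M,k}^{(q)}(x,y)$ directly. This is the paper's proof, and it needs neither commutation with $\partial_{x,q}^{M,\mathrm R}$ nor uniqueness.
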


\begin{proof}
The coefficients in \eqref{eq:Z-coeff-expansion} depend on $r$ and $t$
only through $rt$.  Reversion and interchange of $x$ and $y$ both change the
sign of $\omega$, while leaving $s$ and $rt$ unchanged.  Hence
$\mathfrak rZ=\tau Z$.  Relabelling covariance of the right derivative gives
\[
 \partial_{y,q}^{M,\mathrm R}(\tau Z)
 =\tau\bigl(\partial_{x,q}^{M,\mathrm R}Z\bigr)=0,
\]
and conjugation by reversion yields the second equation in
\eqref{eq:two-sided-proved}.  A simultaneous admissible superrotation
preserves $r$, $s$, and $t$ and acts naturally on $\omega$ and on the value
module.  Applying this action term by term in
\eqref{eq:Z-coeff-expansion} proves \eqref{eq:kernel-covariance}.
\end{proof}

\subsection{Finite-block tensor representatives by ambient duality}

The projected zonal symbol of the preceding subsection is a distinguished
right/left null covariant, but one-sided application of the radial Fischer
projector does not transfer the Pizzetti reproducing identity.

\begin{proposition}[Degree-one failure of the one-sided transfer]
\label{prop:one-sided-failure}
Let $M\ge2$, $0<q<1$, $U=\varnothing$, and $k=1$.  Put
$\sigma_M=2\pi^{M/2}/\Gamma(M/2)$ and define the one-sided candidate
\[
 \mathscr G_{M,1}^{\mathrm{one}}(x,y)[a]
 =\Pi_{M,1}^{(q),\{y\}}
   \bigl(\mathsf G_{M,1}(x,y)a\bigr).
\]
For $P(y)=y$ one has
\begin{equation}\label{eq:one-sided-counterexample}
 \mathcal I_{M,y}^{\SSph}\!\left(
 \mathscr G_{M,1}^{\mathrm{one}}(x,y)[y]\right)
 =\left(1-\frac{M}{[M]_q}\right)x\ne0,
 \qquad
 \Pi_{M,1}^{(q),\{y\}}x=0.
\end{equation}
Thus the projector cannot be moved through supersphere contraction in the
manner used by the naive one-sided construction.
\end{proposition}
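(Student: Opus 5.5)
The plan is a direct degree-one computation in three steps: write down the ambient kernel, apply the projector explicitly, and contract with the Pizzetti functional using \Cref{lem:homogeneous-monomial-contraction}.

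\textbf{The ambient kernel.} For $k=1$, formula \eqref{eq:ambient-seed-coefficients} has only the term $j=0$, so $H^{\mathrm{amb}}_{M,1}=s$. From \eqref{eq:ambient-gamma-closed},
\[
 \gamma_{M,1}=-\frac{\Gamma(M/2+1)}{2\pi^{M/2}}=-\frac{M}{2}\cdot\frac{1}{\sigma_M}.
\]
Hence $\mathsf G_{M,1}(x,y)=-\tfrac{M}{2\sigma_M}\,s$. As a sanity check, \eqref{eq:superspace-ambient-reproduction} then says $\mathcal I^{\SSph}_{M,y}(s\,y)=-\tfrac{2\sigma_M}{M}\,x$. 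The same value comes out of \eqref{eq:homogeneous-monomial-contraction} with $b=\ell=1$, $Q=y$, $N=1$, $q_0=0$. I will need this value again below, together with $\mathcal I^{\SSph}_{M,y}(t)=-\sigma_M$, which follows from the sphere relation.

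\textbf{The projector in degree one.} By \eqref{eq:imported-fischer-projector}, $\Pi^{(q)}_{M,1}=I-L_x(\mathcal K_{M,0})^{-1}\partial^{M,\mathrm R}_{x,q}$. The operator $\mathcal K_{M,0}$ acts on degree-zero elements, which are $x$-constants, and there $\partial_{x,q}^{M,\mathrm R}L_x$ is multiplication by $[M]_q$. This is the $q$-analogue of $\partial_x x=M$, obtained by evaluating \eqref{eq:qder-scalar}--\eqref{eq:qder-bivector} on $x=\tfrac12\{x,\cdot\}$-type generators. Two consequences follow.
\begin{enumerate}[label=\textup{(\alph*)}]
\item For the second claim, $\Pi^{(q)}_{M,1}x=x-x[M]_q^{-1}[M]_q=0$.
\item For the first claim, I apply $\Pi^{(q)}_{M,1}$ to $s\,y$, where $y$ is a passive right factor. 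I will write $s\,y$ in the zonal normal form \eqref{eq:zonal-module}, using $s\,y=\omega$-terms plus $xt$ via $\omega y=(xy-s/2)y=xt-\tfrac12 sy$. I then compute $\partial^{M,\mathrm R}_{x,q}(sy)$ with \eqref{eq:scalar-qderivative}, which gives a multiple of $t$ (namely $2t$ up to the $q$-Jackson normalisation), and subtract $x[M]_q^{-1}$ times it.
\end{enumerate}
The expected outcome is
\[
 \Pi^{(q)}_{M,1}(s\,y)=s\,y-\frac{2}{[M]_q}\,x\,t,
\]
with the exact constant to be confirmed from the rules. Equivalently, this is $Z^{(q)}_{M,1}$ from \Cref{thm:closed-recurrence} (with $a_{1,0}=\lambda_{M,1}/d_{M,1}$ and $b_{1,0}=-2/d_{M,1}$), multiplied on the right by $y$ and rewritten in normal form. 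I will use the closed formula as a cross-check.

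\textbf{Contraction with the Pizzetti functional.} Applying $\mathcal I^{\SSph}_{M,y}$ term by term, with the two integrals recorded above, gives
\[
 -\frac{M}{2\sigma_M}\left(-\frac{2\sigma_M}{M}x+\frac{2\sigma_M}{[M]_q}x\right)=\left(1-\frac{M}{[M]_q}\right)x .
\]
This is nonzero because, for $0<q<1$ and $M\ge2$, one has $[M]_q=1+q+\dots+q^{M-1}<M$.

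The main obstacle is the bookkeeping of handedness: $\partial^{M,\mathrm R}$ is a right derivative, $P(y)=y$ sits to the right of the kernel, and $L_x$ multiplies on the left. The Jackson normalisations must yield exactly the factor $[M]_q$, and no stray power of $q$. I would first settle this by checking the projector against the zonal coefficients of \Cref{thm:closed-recurrence} at $k=1$ before performing the contraction.
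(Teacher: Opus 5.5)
Your main computation is the paper's proof: $\mathsf G_{M,1}=-\tfrac{M}{2\sigma_M}s$, then $\Pi^{(q)}_{M,1}(sy)=sy-\tfrac{2}{[M]_q}xt$, then the moments $\mathcal I(sy)=-\tfrac{2\sigma_M}{M}x$ and $\mathcal I(t)=-\sigma_M$, and finally $[M]_q<M$.

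The flaw is in how you propose to settle the constant, which you yourself identify as the main obstacle. Two of your claims are false in this calculus: the equivalence $\Pi^{(q)}_{M,1}(sy)=Z^{(q)}_{M,1}\,y$, and the assertion behind it that $\mathcal K_{M,0}$ is multiplication by $[M]_q$ on all $x$-constants.

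To see why, note that over $\mathbb C(q)[t]$ the degree-zero block is spanned by $1$ and $y$. One has $\mathcal K_{M,0}(1)=\partial x=[M]_q$. But \eqref{eq:qder-scalar}--\eqref{eq:qder-bivector} give $\mathcal K_{M,0}(y)=\partial(\omega+\tfrac12 s)=([M-1]_q+1)\,y=d_{M,1}(q)\,y$. For $0<q<1$ this differs from $[M]_q\,y$, because $[M]_q=1+q[M-1]_q$.

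So the $q$-derivative does not commute with right multiplication by the passive $y$. In particular, multiplying the null element $Z^{(q)}_{M,1}$ on the right by $y$ does not give a null element. From $\omega y=xt-\tfrac12 sy$ and $a_{1,0}-\tfrac12 b_{1,0}=1$ you get $Z^{(q)}_{M,1}y=sy-\tfrac{2}{[M-1]_q+1}xt$. Its derivative is $2t\bigl(1-\tfrac{[M]_q}{[M-1]_q+1}\bigr)\neq0$.

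If you calibrated the constant against $Z^{(q)}_{M,1}$ as planned, you would obtain $\bigl(1-\tfrac{M}{[M-1]_q+1}\bigr)x$ instead of the stated defect. That expression even vanishes at $M=2$, which would destroy the conclusion.

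The fix is to drop this cross-check. Also drop the detour through $\omega y$: \eqref{eq:qder-bivector} does not cover it, since there $B$ must be scalar.

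All the computation needs is that $\partial^{M,\mathrm R}_{x,q}(sy)=(\partial^{\mathrm{sc}}_{x,q}s)\,y=2t$ lies in the scalar direction of the degree-zero block, where $\mathcal K_{M,0}$ does act by $[M]_q$. This is the same scalar-times-passive-vector rule the paper uses for $\tfrac12\{x,e_j\}e_j$ in \Cref{prop:q-obstructions}(ii). Then $\Pi^{(q)}_{M,1}(sy)=sy-L_x\mathcal K_{M,0}^{-1}(2t)=sy-\tfrac{2}{[M]_q}xt$, and your contraction finishes exactly as in the paper.

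The identity $\Pi^{(q)}_{M,1}x=0$ needs no eigenvalue at all. Since $x=L_x1$, it follows from $\Pi^{(q)}_{M,1}L_x=L_x-L_x\mathcal K_{M,0}^{-1}\mathcal K_{M,0}=0$.
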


\begin{proof}
The generic degree-one inverse kernel is
\[
 \mathsf G_{M,1}(x,y)=-\frac{M}{2\sigma_M}\,s,
 \qquad s=\{x,y\}.
\]
Writing $t=y^2$, the degree-one projector gives
\[
 \Pi_{M,1}^{(q),\{y\}}(s\,y)
 =s\,y-\frac{2}{[M]_q}x\,t.
\]
The invariant second moments are
\[
 \mathcal I_{M,y}^{\SSph}(s\,y)=-\frac{2\sigma_M}{M}x,
 \qquad
 \mathcal I_{M,y}^{\SSph}(t)=-\sigma_M.
\]
Substitution yields the first identity in
\eqref{eq:one-sided-counterexample}.  The second follows directly from the
one-variable degree-one Fischer decomposition.  Since $[M]_q<M$ for
$M\ge2$ and $0<q<1$, the defect is nonzero.
\end{proof}

A finite-block tensor representative of the projection map is obtained through
the duality supplied by the relevant nondegenerate scalar pairing.  We write
$\jmath_{y\to x}$ for the coefficient-preserving relabelling of the active
variable $y$ as $x$, and $\jmath_{x\to y}$ for its inverse.

\begin{lemma}[Finite-block coevaluation]
\label{lem:finite-block-coevaluation}
Let $\mathbb F$ be a field of characteristic zero, let $V$ be a finite-dimensional
$\mathbb F$-vector space with a nondegenerate bilinear form $\mathsf B$, and
let $E$ and $W$ be finite-dimensional $\mathbb F$-vector spaces.  Extend $\mathsf B$ to the
nondegenerate coefficient-dual pairing
\begin{equation}\label{eq:coefficient-dual-pairing}
 \mathsf B^E(\phi\otimes\lambda,p\otimes a)
 =\mathsf B(\phi,p)\lambda(a),
 \qquad
 \phi,p\in V,\quad \lambda\in E^*,\quad a\in E.
\end{equation}
Let $S_y\subseteq V_y\otimes E$ be a finite subspace, put
$S_x=\jmath_{y\to x}S_y$, and let $A:S_x\to W$ be linear.  Choose a basis
$\{P_\alpha\}$ of $S_y$ and an ambient dual family
$\{P^\alpha\}\subseteq V_y\otimes E^*$ satisfying
\[
 \mathsf B^E(P^\alpha,P_\beta)=\delta^\alpha_\beta.
\]
Such a family exists by the nondegeneracy of
\eqref{eq:coefficient-dual-pairing}.  Define
\begin{equation}\label{eq:finite-block-coevaluation}
 \mathscr K_A(x,y)
 =\sum_\alpha A\bigl(\jmath_{y\to x}P_\alpha\bigr)
   \otimes P^\alpha(y)
 \in W\otimes(V_y\otimes E^*).
\end{equation}
Then, for every $P\in S_y$,
\begin{equation}\label{eq:finite-block-reproduction}
 \bigl\langle\mathscr K_A,P\bigr\rangle_{\mathsf B^E,y}
 =A\bigl(\jmath_{y\to x}P\bigr).
\end{equation}
The represented operator in \eqref{eq:finite-block-reproduction} is
independent of all choices.  The tensor
\eqref{eq:finite-block-coevaluation} is unique modulo
$W\otimes S_y^{\perp}$, where $S_y^{\perp}$ is the annihilator of $S_y$
under \eqref{eq:coefficient-dual-pairing}; when $S_y=V_y\otimes E$, it is
the canonical coevaluation tensor and is basis independent.
\end{lemma}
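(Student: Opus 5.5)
The argument is finite-dimensional linear algebra, carried out in four steps.

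\textbf{Nondegeneracy of $\mathsf B^E$.} Choose a basis $\{e_i\}$ of $V$ and a basis $\{a_r\}$ of $E$ with dual basis $\{\lambda^r\}$ of $E^*$. In the product bases $\{e_i\otimes\lambda^r\}$ and $\{e_j\otimes a_s\}$, the Gram matrix of \eqref{eq:coefficient-dual-pairing} is the Kronecker product of the Gram matrix of $\mathsf B$ with the identity. Its determinant is therefore a power of $\det\mathsf B\neq0$, so $\mathsf B^E$ is nondegenerate. Since the spaces are finite-dimensional, the map
\[
V_y\otimes E^*\to(V_y\otimes E)^*,\qquad \Phi\mapsto\mathsf B^E(\Phi,\cdot)
\]
is an isomorphism.

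\textbf{Existence of the dual family.} Compose the isomorphism above with the (surjective) restriction $(V_y\otimes E)^*\to S_y^*$. For each $\alpha$, take any preimage of the coordinate functional $P_\alpha^*$; this gives the required $P^\alpha$.

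\textbf{Reproduction.} The pairing $\langle\mathscr K_A,P\rangle_{\mathsf B^E,y}$ acts on the second tensor factor and leaves the $W$-factor untouched, so it is linear in $P$. Thus it suffices to check \eqref{eq:finite-block-reproduction} on the basis vectors $P=P_\beta$. There the sum collapses by $\mathsf B^E(P^\alpha,P_\beta)=\delta^\alpha_\beta$ to $A(\jmath_{y\to x}P_\beta)$. Here $\jmath_{y\to x}$ is linear by definition, and one has to take care that the pairing convention treats the $W$-valued coefficient as a passive scalar.

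\textbf{Independence and uniqueness.} The represented operator is $P\mapsto A(\jmath_{y\to x}P)$, which involves no choices. If two tensors $\mathscr K,\mathscr K'\in W\otimes(V_y\otimes E^*)$ both reproduce on $S_y$, write $\mathscr K-\mathscr K'=\sum_c w_c\otimes\Phi_c$ with $\{w_c\}$ a basis of $W$. Pairing against every $P\in S_y$ gives $\sum_c w_c\,\mathsf B^E(\Phi_c,P)=0$, and linear independence of the $w_c$ forces each $\Phi_c\in S_y^\perp$. Conversely, adding any element of $W\otimes S_y^\perp$ does not change the reproduction property.

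When $S_y=V_y\otimes E$, we have $S_y^\perp=0$ by nondegeneracy, so the tensor is unique. It is therefore independent of the choice of basis and equals the coevaluation $\sum_\alpha P_\alpha\otimes P^\alpha$ transported by $A\circ\jmath_{y\to x}$. Basis independence can also be seen directly: under a change of basis $P_\alpha\mapsto\sum_\beta g_{\beta\alpha}P_\beta$, the dual family transforms by $g^{-1}$.

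\textbf{Main obstacle.} There is no real analytic difficulty. The one delicate point is the bookkeeping of signs in the super (graded) setting: the pairing $\mathsf B$ is even and supersymmetric, so moving odd coefficients past it could introduce signs. I would handle this by working with homogeneous bases and noting that all identities used are $\mathbb F$-linear. Evenness of $\mathsf B$ means the Kronecker-product argument is unaffected, and it keeps the dual family homogeneous of the matching parity.
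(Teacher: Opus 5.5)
Your proposal is correct and follows the paper's proof: you check reproduction on the basis $\{P_\beta\}$ using $\mathsf B^E(P^\alpha,P_\beta)=\delta^\alpha_\beta$, extend by linearity, and identify the freedom in the tensor as exactly an element of $W\otimes S_y^{\perp}$. Beyond the paper, you also verify that $\mathsf B^E$ is nondegenerate and that the dual family exists, both of which the paper takes as given, and your uniqueness argument covers a change of basis together with a change of dual family more explicitly than the paper's one-line remark; the super-sign discussion is unnecessary, since the lemma is purely $\mathbb F$-linear algebra.
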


\begin{proof}
Write $P=\sum_\beta c_\beta P_\beta$.  Contracting
\eqref{eq:finite-block-coevaluation} with $P$ and using the defining duality
gives
\[
 \sum_{\alpha,\beta}c_\beta
 \mathsf B^E(P^\alpha,P_\beta)
 A(\jmath_{y\to x}P_\alpha)
 =\sum_\beta c_\beta A(\jmath_{y\to x}P_\beta).
\]
Changing the ambient dual family adds a tensor whose second factor annihilates
$S_y$, proving the final assertion.
\end{proof}

We apply this lemma after extending the passive scalar ring to a field.  For
$U=\{u_1,\ldots,u_N\}$ put
\begin{align}\label{eq:passive-fraction-field}
 \mathcal T_M^U
 &=\mathbb C(q)[c_{ij}:1\le i\le j\le N],
 & c_{ij}&=\tfrac12\{u_i,u_j\},\notag\\
 \mathbb F_U
 &=\operatorname{Frac}(\mathcal T_M^U)
 =\mathbb C(q)(c_{ij}).
\end{align}
Extend $\mathcal V_{M,k}^{U}$ and the localized Fischer projector from
$\mathcal T_M^U$ to $\mathbb F_U$.  Fix a faithful coordinate realization of
$\{x,y\}\cup U$.  In the sufficient range $N+2\le m$, the radial
coordinate map is injective; it identifies $\mathcal T_M^U$ with a subring of
the passive-coordinate scalar ring and hence embeds $\mathbb F_U$ into its
fraction field.  Let $S_{M,k}^{U}(y)$ be the coordinate image over
$\mathbb F_U$ of
$\mathcal V_{M,k}^{U}\otimes_{\mathcal T_M^U}\mathbb F_U$, with the active
variable relabelled as $y$.  This is a finite-dimensional
$\mathbb F_U$-vector space.  Let $E_U$ be the finite-dimensional
$\mathbb F_U$-subspace of the represented Clifford value algebra spanned by
the coefficients of $S_{M,k}^{U}(y)$ and of its image under the active Fischer
projector.  Thus
$S_{M,k}^{U}(y)\subseteq\mathcal P_k(y)\otimes E_U$.  Put
$S_{M,k}^{U}(x)=\jmath_{y\to x}S_{M,k}^{U}(y)$.

For a numerical specialization $q=q_0\in(0,1)$, first evaluate the finite
projector formulas at $q_0$, assuming that every required denominator and the
determinant remain nonzero, and then extend the passive scalar ring to
$\mathbb F_{U,q_0}=\mathbb C(c_{ij})$.  With $q$ kept generic, the statements
below also remain valid after a field specialization of the localized passive
scalar ring for which the induced map is injective on the chosen finite block.
The active $x$-projector uses $U$, but not the integrated variable $y$, as
passive support:
\[
 A_{M,k}^{(q),U}
 :=\Pi_{M,k}^{(q),U}\big|_{S_{M,k}^{U}(x)}:
 S_{M,k}^{U}(x)\longrightarrow
 \mathcal M_{M,k}^{(q),U},
 \qquad
 \mathcal M_{M,k}^{(q),U}
 =\Pi_{M,k}^{(q),U}S_{M,k}^{U}(x).
\]
This distinction is essential: inserting $y$ as a passive parameter produces
the failure in \Cref{prop:one-sided-failure}.

\begin{theorem}[Finite-block tensor representatives of the $q$-Fischer projection]
\label{thm:q-fischer-transfer}
Work over $\mathbb F_U$, over $\mathbb F_{U,q_0}$ after the numerical
procedure above, or over a field $\mathbb F$ obtained by an injective
finite-block specialization with $q$ generic; denote the chosen field by
$\mathbb F$.
For $k\ge1$, assume that the image of $\mathcal D_{M,k-1}^{U}$ is nonzero in
$\mathbb F$ and has been inverted; for $k=0$ use the identity projector.
Assume also that the coordinate realization of $\{x,y\}\cup U$ is faithful;
the sufficient standard condition is $N+2\le m$.
For every scalar form below, the contraction is taken with its coefficient-dual
extension from \eqref{eq:coefficient-dual-pairing} to the value block $E_U$.

\begin{enumerate}[label=\textup{(\roman*)}]
\item If $M\notin-2\mathbb N_0$, let
$\mathsf B_{M,k}(P,Q)=\mathcal I_{M}^{\SSph}(PQ)$ on $\mathcal P_k$.  Any
coevaluation representative
$\mathscr K_{M,k}^{(q),U}$ obtained from
\Cref{lem:finite-block-coevaluation} with $A=A_{M,k}^{(q),U}$ satisfies
\begin{equation}\label{eq:generic-q-reproduction}
 \bigl\langle\mathscr K_{M,k}^{(q),U},P\bigr\rangle_{
 \mathsf B_{M,k}^{E_U},y}
 =A_{M,k}^{(q),U}(P(x,U))
\end{equation}
for every $P\in S_{M,k}^{U}(y)$.

\item If $M=-2u$ and $0\le k\le u$, use the nondegenerate residue pairing
$\mathcal R_{u,k}$.  The resulting representative
$\mathscr K_{u,k}^{(q),[-1],U}$ satisfies
\begin{equation}\label{eq:q-low-residue-reproduction}
 \bigl\langle\mathscr K_{u,k}^{(q),[-1],U},P
 \bigr\rangle_{\mathcal R_{u,k}^{E_U},y}
 =A_{-2u,k}^{(q),U}(P(x,U)).
\end{equation}

\item If $M=-2u$ and $k\ge2u+1$, use the ordinary nondegenerate Pizzetti
pairing $\mathsf B_{-2u,k}$.  The resulting representative
$\mathscr K_{u,k}^{(q),[0],U}$ satisfies
\begin{equation}\label{eq:q-high-reproduction}
 \bigl\langle\mathscr K_{u,k}^{(q),[0],U},P
 \bigr\rangle_{\mathsf B_{-2u,k}^{E_U},y}
 =A_{-2u,k}^{(q),U}(P(x,U)).
\end{equation}

\item Let $M=-2u$ and $u+1\le k\le2u$, and put
$d=k-u$, $h=2u-k$, and
$\mathfrak R_{u,k}=r^d\mathcal P_h$.  Define
\[
 R_{u,k}^{U}(x)=S_{-2u,k}^{U}(x)\cap
  \bigl(\mathfrak R_{u,k}(x)\otimes E_U\bigr),
 \qquad
 R_{u,k}^{U}(y)=\jmath_{x\to y}R_{u,k}^{U}(x).
\]
Suppose, in addition, that
\begin{equation}\label{eq:q-radical-preservation}
 A_{-2u,k}^{(q),U}\bigl(R_{u,k}^{U}(x)\bigr)
 \subseteq\mathfrak R_{u,k}(x)\otimes E_U.
\end{equation}
For $z\in\{x,y\}$ put
\[
 \overline S_{u,k}^{U}(z)=S_{-2u,k}^{U}(z)/R_{u,k}^{U}(z),
 \qquad
 \overline V_{u,k}(z)=\mathcal P_k(z)/\mathfrak R_{u,k}(z).
\]
Then the projector induces the well-defined quotient map
\begin{equation}\label{eq:q-quotient-map-codomain}
 \overline A_{u,k}^{(q),U}:
 \overline S_{u,k}^{U}(x)
 \longrightarrow\overline V_{u,k}(x)\otimes E_U,
 \qquad
 [P]\longmapsto[A_{-2u,k}^{(q),U}P],
\end{equation}
and its restriction defines
\[
 A_{u,k}^{(q),\mathrm{rad},U}:
 R_{u,k}^{U}(x)\longrightarrow
 \mathfrak R_{u,k}(x)\otimes E_U.
\]
Applying \Cref{lem:finite-block-coevaluation} in the ambient quotient with
$\overline{\mathsf B}_{u,k}^{E_U}$ and in the ambient radical with
$\mathsf C_{u,k}^{E_U}$ gives tensor representatives
$\mathscr K_{u,k}^{(q),\mathrm{quo},U}$ and
$\mathscr K_{u,k}^{(q),\mathrm{rad},U}$ satisfying
\begin{align}
 \bigl\langle\mathscr K_{u,k}^{(q),\mathrm{quo},U},[P]
 \bigr\rangle_{\overline{\mathsf B}_{u,k}^{E_U},y}
 &=\overline A_{u,k}^{(q),U}[P],
 \label{eq:q-quotient-reproduction}\\
 \bigl\langle\mathscr K_{u,k}^{(q),\mathrm{rad},U},R
 \bigr\rangle_{\mathsf C_{u,k}^{E_U},y}
 &=A_{u,k}^{(q),\mathrm{rad},U}(R).
 \label{eq:q-radical-reproduction}
\end{align}
Without \eqref{eq:q-radical-preservation}, no quotient or radical
representative is asserted.
\end{enumerate}
In (i)--(iii), and in the radical statement of (iv), the represented output
belongs to the image of the right Fischer projector and is right
$q$-monogenic in $x$.  In the quotient statement of (iv), the output is the
class of such a $q$-monogenic representative.
\end{theorem}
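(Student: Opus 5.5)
The plan is to reduce all four parts to \Cref{lem:finite-block-coevaluation}, checking for each scalar form that the hypotheses of that lemma hold over the chosen field $\mathbb F$. We also verify separately that the represented outputs lie in the image of the right Fischer projector.

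\textbf{Setting up.} Everything is first base-changed to $\mathbb F$. The ambient spaces $\mathcal P_k(y)$ are finite-dimensional over $\mathbb C$; we tensor them with $\mathbb F$ and keep calling them $V_y$. Nondegeneracy of a bilinear form on a finite-dimensional space means its Gram matrix is invertible, and this survives any field extension. The value block $E_U$ is finite-dimensional by construction. Faithfulness of the coordinate realization, i.e.\ the condition $N+2\le m$ of \Cref{prop:imported-q-fischer}, makes $S_{M,k}^U(y)$ a genuine finite-dimensional subspace of $V_y\otimes E_U$. Inverting $\mathcal D_{M,k-1}^U$ makes $A_{M,k}^{(q),U}$ a well-defined linear map into $\mathcal M_{M,k}^{(q),U}$.

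\textbf{Parts (i)--(iii).} Only the nondegeneracy input changes from part to part:
\begin{itemize}
\item for (i) we use \Cref{thm:generic-homogeneous-kernel}, since the existence of the inverse kernel $\mathsf G_{M,k}$ is exactly nondegeneracy of $\mathsf B_{M,k}$;
\item for (ii) we use \Cref{thm:exceptional-residue-reproduction};
\item for (iii) we use the case $k\ge 2u+1$ of \Cref{thm:exceptional-full-radical}.
\end{itemize}
We then apply \Cref{lem:finite-block-coevaluation} with $V=\mathcal P_k$ and $E=E_U$. The coefficient-dual pairing $\mathsf B^{E_U}$ is nondegenerate because it is a tensor product of nondegenerate forms. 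Equation \eqref{eq:finite-block-reproduction} then gives \eqref{eq:generic-q-reproduction}, \eqref{eq:q-low-residue-reproduction} and \eqref{eq:q-high-reproduction} verbatim. The output has the form $A(\jmath P)=\Pi^{(q),U}_{M,k}(\cdot)$, so it lies in the kernel summand of \eqref{eq:imported-fischer-decomposition}; hence it is right $q$-monogenic in $x$. Here we must stress that $y$ is not passive support of $\Pi$. This is exactly what avoids the failure in \Cref{prop:one-sided-failure}.

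\textbf{Part (iv).} The projector itself does not descend automatically, so the work is in setting up the quotient and radical maps.

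Well-definedness of $\overline A$:
\begin{itemize}
\item If $P-P'\in R^U_{u,k}(x)$, then by hypothesis \eqref{eq:q-radical-preservation} we have $A(P-P')\in\mathfrak R_{u,k}\otimes E_U$, so $[AP]=[AP']$.
\item The restriction $A^{\mathrm{rad}}$ is well defined by the same hypothesis.
\end{itemize}

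Nondegeneracy of the ambient forms:
\begin{itemize}
\item $\overline{\mathsf B}_{u,k}$ on $\overline V_{u,k}$ is nondegenerate by \Cref{thm:exceptional-full-radical}.
\item $\mathsf C_{u,k}$ on $\mathfrak R_{u,k}$ is nondegenerate by injectivity of multiplication by $r^d$ (\Cref{def:transported-radical-pairing}) together with \Cref{thm:exceptional-residue-reproduction} in degree $h$.
\end{itemize}

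Finite subspaces. The map $\overline S^U_{u,k}(y)\to\overline V_{u,k}(y)\otimes E_U$ is injective, because $R$ is defined as the intersection of $S$ with the radical. So $\overline S$ is a finite subspace of the ambient quotient, and $R^U_{u,k}(y)$ is a finite subspace of $\mathfrak R_{u,k}(y)\otimes E_U$.

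Two further applications of \Cref{lem:finite-block-coevaluation} then yield \eqref{eq:q-quotient-reproduction} and \eqref{eq:q-radical-reproduction}. In the radical case, monogenicity follows as in (i)--(iii). In the quotient case, the output is the class of a projected, hence monogenic, element.

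The main obstacle is this bookkeeping in (iv): making sure the ambient quotient and radical spaces, tensored with $E_U$, still carry nondegenerate coefficient-dual pairings, and that $\overline S$ really embeds. Everything else is formal.
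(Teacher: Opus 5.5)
Your proposal is correct and follows the paper's proof essentially step for step. Each scalar form is nondegenerate, so its coefficient-dual extension is too, and this feeds into \Cref{lem:finite-block-coevaluation}; hypothesis \eqref{eq:q-radical-preservation} gives well-definedness in (iv); and right $q$-monogenicity comes from the output being a Fischer projection. Your explicit check that $\overline S_{u,k}^U(y)$ embeds in $\overline V_{u,k}(y)\otimes E_U$ (because $R_{u,k}^U$ is the full intersection with the radical) is a detail the paper leaves implicit. Citing \Cref{thm:exceptional-full-radical} rather than part (iii) of \Cref{thm:confluent-supersphere-duality} for nondegeneracy in (iii) is an equivalent choice.
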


\begin{proof}
The scalar forms used in (i)--(iii) are nondegenerate by the generic
reproduction theorem, the low-degree residue theorem, and the high-degree
part of \Cref{thm:confluent-supersphere-duality}, respectively.  Their
coefficient-dual extensions are nondegenerate by
\eqref{eq:coefficient-dual-pairing}, so
\Cref{lem:finite-block-coevaluation} applies with
$A=A_{M,k}^{(q),U}$.  In the collision range,
\eqref{eq:q-radical-preservation} is precisely the condition ensuring that
\eqref{eq:q-quotient-map-codomain} is independent of the chosen
representative and that the restricted map takes values in the ambient
radical.  The nondegenerate quotient and radical forms are those of
\eqref{eq:graded-confluent-pairing}.  The final handed-nullity statements
follow because every ambient representative on the right is obtained by the
Fischer projector.
\end{proof}

\begin{remark}\label{rem:collision-compatibility-open}
Condition \eqref{eq:q-radical-preservation} is not proved here for a general
nontrivial collision block.  It is retained as the exact compatibility
condition required by the quotient and radical constructions.  Determining
natural blocks for which it holds, or modifying the projector so that it is
automatic, remains open.
\end{remark}

Two further calculations delimit the present construction.  They show that
the natural scalar projected symbol is not itself a reproducing idempotent and
that the intrinsic derivative used here does not preserve the standard
full-frame quotient relation.

\begin{proposition}
\label{prop:q-obstructions}
The following obstructions rule out the most direct scalar and full-frame
versions in the indicated real parameter ranges.  For radial-algebra-valued
kernels define the
componentwise Pizzetti convolution
\[
 (K_1*K_2)(x,z)=\mathcal I_{M,y}^{\SSph}
 \bigl(K_1(x,y)K_2(y,z)\bigr).
\]
\begin{enumerate}[label=\textup{(\roman*)}]
\item For $M\ge3$ and $0<q<1$, the convolution of the degree-one
projected symbol with itself is proportional to that symbol only if
\begin{equation}\label{eq:degree-one-proportionality}
 \bigl([M-1]_q-(M-1)\bigr)\bigl([M-1]_q+1\bigr)=0,
\end{equation}
which never occurs in this range.
\item Let $M\ge2$, $0<q<1$, and let $e_1,\ldots,e_M$ be a complete
Clifford frame.  Let $\partial_{x,q}^{E,\mathrm R}$ denote the
Euclidean specialization of the intrinsic radial derivative.  For the
standard frame relation
$R_x=x+\frac12\sum_j\{x,e_j\}e_j$ one has
\begin{equation}\label{eq:frame-descent-defect}
 \partial_{x,q}^{E,\mathrm R}(R_x)=[M]_q-M\ne0.
\end{equation}
Thus this intrinsic radial derivative does not descend through the standard
full-frame quotient.
\end{enumerate}
\end{proposition}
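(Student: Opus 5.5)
Both parts reduce to explicit degree-one computations with the two-point rules \eqref{eq:qder-scalar}--\eqref{eq:qder-bivector} and the invariant second moments already used in \Cref{prop:one-sided-failure}. No exceptional Fischer geometry is needed, since $M\ge2$ or $M\ge3$ is generic.

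For (i), I would first simplify $Z_{M,1}^{(q)}$ using \Cref{thm:closed-recurrence} at $k=1$. There $\lambda_{M,1}=[M-1]_q$ and $d_{M,1}=[M-1]_q+1$, so $a_{1,0}=[M-1]_q/d_{M,1}$ and $b_{1,0}=-2/d_{M,1}$. Since $\omega=xy-s/2$, this gives
\[
 Z_{M,1}^{(q)}(x,y)=\{x,y\}-\frac{2}{d_{M,1}}\,xy,
\]
a combination of a scalar term and the full Clifford product.

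I would then expand the convolution
\[
 \mathcal I_{M,y}^{\SSph}\bigl(Z_{M,1}^{(q)}(x,y)Z_{M,1}^{(q)}(y,z)\bigr)
\]
into four second-moment integrals. These are $\mathcal I_y(\{x,y\}\{y,z\})$, $\mathcal I_y(\{x,y\}yz)$, $\mathcal I_y(xy\{y,z\})$ and $\mathcal I_y(xyyz)=x\,\mathcal I_y(t)\,z$. Each follows from the moments $\mathcal I_y(\{x,y\}y)=-\tfrac{2\sigma_M}{M}x$ and $\mathcal I_y(t)=-\sigma_M$, obtained by linearity in $x$ and $z$. The result has the form $\alpha\{x,z\}+\beta xz$ with explicit $\alpha,\beta$ in terms of $M$, $\sigma_M$ and $d_{M,1}$.

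For $M\ge3$, the scalar $\{x,z\}$ and the bivector part of $xz$ are independent in the two-vector radial algebra. Proportionality to $Z_{M,1}^{(q)}(x,z)$ is therefore the single condition $\beta=-2\alpha/d_{M,1}$. I expect that clearing denominators factors this condition as \eqref{eq:degree-one-proportionality}; getting the signs right from $r=-1$ and $\omega=xy-s/2$ is the main bookkeeping risk. Neither factor can vanish: for $0<q<1$ and $M-1\ge2$ one has $0<[M-1]_q<M-1$, so $[M-1]_q\ne M-1$, and $[M-1]_q+1>0$.

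For (ii), I would apply $\partial_{x,q}^{E,\mathrm R}$ term by term to $R_x$, using linearity over the constant frame vectors.
\begin{itemize}
\item The term $x$ equals $\omega$-free; I would treat it via the bivector rule \eqref{eq:qder-bivector} applied to $x=\omega_{x,e}+\ldots$. Equivalently, I would use the one-variable rule, which yields the $q$-dimension $[M]_q$. This is the same mechanism that produces $[M]_q$ in the degree-one projector of \Cref{prop:one-sided-failure}.
\item Each term $\{x,e_j\}e_j$ is scalar in $x$. By \eqref{eq:scalar-qderivative} with $s=\{x,e_j\}$, it gives $2e_j\,\delta_q^{(s)}s\,e_j=2e_je_j$, with no $q$-deformation.
\item With the convention \eqref{eq:sign-conventions}, $e_j^2=-1$, so the frame sum contributes $\tfrac12\sum_j 2e_j^2=-M$.
\end{itemize}
Adding the pieces gives $[M]_q-M$, which is nonzero because $[M]_q<M$ for $M\ge2$ and $0<q<1$.

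The main obstacle is the first item: checking in the Euclidean specialization that the intrinsic derivative of $x$ alone is exactly $[M]_q$ and not $M$. I would verify this by reading off the degree-one Fischer data, that is, the value of $\mathcal K_{M,0}$, from \Cref{prop:imported-q-fischer}.
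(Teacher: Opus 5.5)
Your proposal is correct and follows the paper's proof. For (i), the paper likewise substitutes the invariant second moments into $\mathcal I_{M,y}^{\SSph}\bigl(Z_{M,1}^{(q)}(x,y)Z_{M,1}^{(q)}(y,z)\bigr)$ and compares scalar-to-bivector ratios, using the basis $s,\omega$ where you use $\{x,z\},xz$. For (ii), it likewise takes $\partial_{x,q}^{E,\mathrm R}x=[M]_q$ from the one-vector rule and $-1$ from each frame term.

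Two small remarks:
\begin{enumerate}[label=\textup{(\roman*)}]
\item In your basis the condition comes out linear, $[M-1]_q+1=M$. This already implies \eqref{eq:degree-one-proportionality}; the extra factor $[M-1]_q+1$ in the paper is only an artifact of clearing denominators.
\item Drop the supposedly equivalent route through the bivector rule for the term $x$. Writing $x=-(xe)e$ with $xe=\omega_{x,e}+\tfrac12\{x,e\}$ and moving $e$ outside gives $[M-1]_q+1$, which differs from $[M]_q=1+q[M-1]_q$ when $q\neq1$. That route would even make the defect vanish at $M=2$, so only the one-vector rule should be used.
\end{enumerate}
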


\begin{proof}
Put $L=[M-1]_q$.  From \eqref{eq:initial-coeffs},
\[
 Z_{M,1}^{(q)}(x,y)=\frac{Ls_{xy}-2\omega_{xy}}{L+1}.
\]
The invariant second moments are
\begin{align*}
 \mathcal I_y^{\SSph}(s_{xy}s_{yz})&=-\frac{2\sigma_M}{M}s_{xz},\\
 \mathcal I_y^{\SSph}(s_{xy}\omega_{yz})
 =\mathcal I_y^{\SSph}(\omega_{xy}s_{yz})
 &=-\frac{2\sigma_M}{M}\omega_{xz},\\
 \mathcal I_y^{\SSph}(\omega_{xy}\omega_{yz})
 &=\frac{\sigma_M}{M}
 \left(-\frac{M-1}{2}s_{xz}+(2-M)\omega_{xz}\right),
\end{align*}
where $\sigma_M=2\pi^{M/2}/\Gamma(M/2)$.  These identities follow
by applying the quadratic Pizzetti moment and
polarizing in the external vectors; equivalently, they are the scalar and
bivector components of the invariant tensor
$\mathcal I_y(y\otimes y)$.  For the corresponding superspace second-moment
formula, see \cite[Section~4]{DeBieSommen2007Spherical}.  Substitution shows that
$\mathcal I_y(Z(x,y)Z(y,z))$ has the same scalar-to-bivector ratio as
$Z(x,z)$ if and only if
$(L-(M-1))(L+1)=0$.  For $M\ge3$ and $0<q<1$ one has
$0<L<M-1$, so neither factor vanishes.  This proves (i).

For (ii), the Euclidean one-vector rules give
$\partial_{x,q}^{E,\mathrm R}x=[M]_q$ and
\[
 \partial_{x,q}^{E,\mathrm R}\!\left(
 \tfrac12\{x,e_j\}e_j\right)=-1
\]
for each frame vector.  Summing over $j$ yields
$\partial_{x,q}^{E,\mathrm R}(R_x)=[M]_q-M$.  Since
$[M]_q<M$ for $M\ge2$ and $0<q<1$, the coordinate-kernel relation $R_x=0$
is not preserved by the intrinsic radial derivative.  Hence the derivative
cannot descend through the standard full-frame quotient.
\end{proof}

\section{Conclusion}

The meromorphic generating kernel
\[
 \mathscr G_\mu(\rho;x,y)
 =\frac{\Gamma(\mu/2)}{2\pi^{\mu/2}}
  \bigl(1+\rho\{x,y\}+\rho^2x^2y^2\bigr)^{-\mu/2}
\]
gives a uniform description of supersphere reproduction at generic and
exceptional superdimensions.  At $M=-2u$, its residue and logarithmic finite
part recover the duality structure of the exceptional Fischer filtration.
The residue controls the renormalized low-degree pairing and the transported
collision radical; the finite part controls the collision quotient and the
ordinary high-degree pairing.  The head--socle calculation shows, in
addition, that the generalized harmonic blocks retain a nondegenerate duality
despite their nonsemisimple structure.

The radial $q$-application provides a covariant two-point zonal null symbol
and identifies the obstruction to a naive one-sided transfer of scalar
Pizzetti reproduction.  The nondegenerate scalar pairings nevertheless yield
finite-block tensor representatives of the localized $q$-Fischer projection.
In the collision range, compatibility with the exceptional radical remains a
separate condition.  These results delimit the part of the deformation that
is intrinsic to the radial calculus without obscuring the main exceptional
supersphere theory.

The present analysis is algebraic and assumes a nonzero bosonic dimension.
The purely fermionic exceptional case is not covered.  Extending the
logarithmic kernels to Poisson, Cauchy, or boundary-value problems on the
superball would require analytic function spaces, boundary traces, and
convergence estimates, and is a natural direction for further work.

\section*{Statements and Declarations}

\medskip
\noindent\textbf{Funding.}\par
This work was co-funded by the Czech Science Foundation (GA\v{C}R), Grant
No.~25-16847S.  It was also co-funded by the University of Ostrava, Grant
No.~SGS05/P\allowbreak\v{R}F/2026.

\medskip
\noindent\textbf{Competing interests.}\par
The authors have no relevant financial or non-financial interests to
disclose.

\medskip
\noindent\textbf{Data availability.}\par
No datasets were generated or analysed during the current study.

\medskip
\noindent\textbf{Author contributions.}\par
All authors contributed to the conception and development of the results,
verification of the proofs, and preparation of the manuscript.  All authors
read and approved the final manuscript.

\bibliographystyle{plainurl}
\bibliography{references}

\end{document}